\newtheorem{Theorem}{Theorem}[part]
\newtheorem{Proposition}{Proposition}[part]
\newtheorem{Remark}{Remark}[part]
\def \Sum{ \sum}
\def\1{\mathbbm{1}}
\def \N{\mathbb{N}}
\def \R{\mathbb{R}}
\def \F{\mathbb{F}}
\def \Gc{{\cal G}}
\def\beqs{\begin{eqnarray*}}
\def\enqs{\end{eqnarray*}}
\def\beq{\begin{eqnarray}}
\def\enq{\end{eqnarray}}
\newcommand{\nc}{\newcommand}
\nc{\esssup}{\mathop{\mathrm{ess\,sup}}}
\newlength{\llegende}
\definecolor{Gcolor}{rgb}{1,0,0.5}
\begin{document}
\title{Dam Management in the Era of Climate Change}
\author{
Cristina Di Girolami\footnote{Dipartimento di Matematica, Universit\`a Alma Mater Studiorum Bologna, Piazza di Porta San Donato, 5, 40126 Bologna, Italy, email: cristina.digirolami2@unibo.it}, 
M'hamed Ga\"igi\footnote{Universit\'e de Tunis El Manar,
Ecole Nationale d'Ing\'enieurs de Tunis, ENIT-LAMSIN, B.P. 37,
Tunis, 1002, Tunisia; email: mhamed.gaigi@enit.utm.tn}, Vathana Ly Vath\footnote{ENSIIE, Laboratoire de Math\'ematiques et Mod\'elisation d'\'Evry, Universit\'e Paris-Saclay, CNRS UMR 8071, UEVE I.B.G.B.I., 23 Bd. de France, 91037 \'Evry Cedex, email: vathana.lyvath@ensiie.fr}, 
Simone Scotti\footnote{Dipartimento di Economia e Management, Universit\`a di Pisa, via Ridolfi 10, 56125 Pisa; email:  simone.scotti@unipi.it}
}

\maketitle

{\footnotesize
{\bf Abstract} {Climate change has a dramatic impact, particularly by concentrating rainfall into a few short periods, interspersed with long dry spells. In this context, the role of dams is crucial.
We consider the optimal control of a dam, where the water level must neither exceed a designated safety threshold nor fall below a minimum level to ensure functionality and sustainability for the downstream river.
To model dry spells and intense rainfall events, commonly referred to as water bombs, we introduce a Hawkes process, a well-known example of a self-exciting process characterized by time-correlated intensity, which endogenously reproduces the clustering of events. The problem is formulated as an optimal switching problem with constraints. We establish existence results and propose numerical methods for approximating the solution.
Finally, we illustrate the main achievements of this approach through numerical examples focusing in particular on the sensitivity of the self-exciting parameter describing the importance of both water bombs and dry-spells.
For the parameter configurations considered in this paper, the optimal water level inside the dam decreases as the self-exciting parameter increases. This numerical finding suggests that, under the considered calibration, the management response is driven more strongly by overtopping risk than by drought risk.
In conclusion, dams will increasingly lose their role as water reserves and take on a greater role in flood protection.}
\medskip

{\bf Keywords}: Climate change, Dry spells, Extreme Rainfall Events, Hawkes processes, Dam management, Optimal switching, Viscosity solution. 

{\bf JEL Classification}: C61, Q54, O32.

{\bf AMS Classification}: 49L25, 49J40, 60G55, 60G57, 91G99.

\section{Introduction}

Climate change has dramatic impacts on people's lives and the global economy. These impacts are, however, quite peculiar: rather than a simple rise in global temperatures, they are characterised by an exponential increase in extreme events. In this paper, we focus mainly on freshwater, which is the keystone of life and the economy, as it is used for drinking, agriculture, industry, and other purposes. Freshwater is essentially generated by rainfall, and its management was the cornerstone of some of the most magnificent historical civilizations in the world, such as the Romans, with their construction of aqueducts, and the Chinese, Babylonians, Maya, and Khmer, with the construction of channels, reservoirs, and lakes (see, for instance, \cite[Chapters 26 and 35]{Minghi}).
More recently, the role of dams in storing freshwater and producing electricity has enabled the settlement of vast desert areas around the world, such as the Colorado Plateau with the construction of the Hoover Dam, and has allowed the harnessing of monsoonal rains in tropical regions. Local populations have adapted their lives to the specific timing and periodicity of rainfall. However, current climate disruption has placed populations in a position where they must find new solutions to manage water flows. In particular, a new and significant risk arises: rainfall, which was previously relatively homogeneous across years (ignoring natural seasonality), now exhibits extreme fluctuations, with certain exceptional events clearly identified, such as the El Nino/La Nina phenomena (see, for instance, \cite{Timmermann}).

In the new century, this relative stability has completely broken down, with the occurrence of long droughts, such as the 2011-2017 drought in California, followed by catastrophic rainfall events, such as the winter of 2017, which was the wettest on record. The resulting floodwaters caused severe damage to the Oroville Dam in Northern California.
The importance of modelling rainfall dynamics has grown in recent years, not only to capture droughts, even in temperate climate zones such as Western Europe, North America, and East Asia (see \cite{Hottovy, Miller2017}), but also to account for the dramatic increase in so-called ``water bombs", i.e. episodes of intense rainfall over a short period. Another important, yet often overlooked, aspect of dam inflow is its direct relationship with climate change. Specifically, the inflow to a mountain dam depends on both rainfall and snowmelt (see \cite{Miller2017}).
Historically, in Europe, rainfall was more prominent in spring, especially in April, while snowmelt peaked in early June. This seasonal pattern ensured a prolonged period of significant inflows, enabling reservoirs to replenish before summer, which is typically characterised by minimal or no rainfall. However, global warming has advanced the timing of snowmelt to April or May, compelling dam managers to release more water into downstream rivers during the spring. This shift increases the risks of flooding in spring and drought in summer. One consequence of this phenomenon is that Switzerland has decided to increase the height of certain dams by approximately 20 meters (see \cite{Manso}), incurring significant maintenance and risk costs.

Suitable mathematical models for dam management are of critical importance for technological, economic, and ecological reasons (see \cite{Abdel-Hameed, Brockwell, Bather, Cuvelier, Meyerhoff, Roche, Yeh1}). Moreover, the significant environmental impact of water reservoirs underscores the need for reliable models for managing and controlling dams and water flows (see \cite{Roche, Yeh2}).
Strict regulations govern water flow management through dams to ensure both safety and efficiency. These regulations are often expressed via the so-called ``Rule Curves" (see \cite{Prasanchum}). A substantial body of literature addresses dam management as an optimal control problem; see \cite{Faddy} for a comprehensive survey.

In this paper, we focus on dam management as a cornerstone of water management and electricity production in temperate regions. Due to freshwater scarcity, the water stored in artificial lakes must be carefully controlled to achieve multiple objectives. First, a minimal outflow must be ensured during drought periods.
A second, but equally important, role of dams is that hydroelectric plants represent a perfectly green and renewable energy source, unlike wind and solar, which are intermittent \cite{Alonzo}, or nuclear plants, which produce radioactive waste. Hydroelectric plants thus play a crucial role both in decarbonisation and in stabilising the electrical grid. Mitigation of carbon emissions can be achieved by replacing fossil-fuel power plants with renewable installations, which has a significant impact on the CO? market (see \cite{Krach-Macrina-24}) and contributes to the global growth of dam construction, with projects increasing in both number and size.
However, the non-intermittency of hydropower is increasingly challenged by extreme events such as droughts and heavy rainfall, which force operators to manage water flows that are increasingly irregular and often extreme.

In mathematical point of view, we study the dam control problem as an optimal mixed switching problem. The dam manager must optimise revenues from power production while maintaining the water level within safe limits. The main challenge lies in managing water inflows, which we model using a marked Hawkes process. This process describes inflow dynamics as a marked jump process, where the intensity itself experiences sudden changes. This framework captures extended periods of dry spells and clustered rain bombs endogenously, rather than assuming rainfall to be homogeneously distributed over time.

Hawkes processes \cite{Haw71} are well-suited for modeling such behaviors and are increasingly applied in various domains, including mathematical finance (see \cite{ACL15, BerBriScoSga21, EGG10, JMS21, MP22}), water management (see \cite{MoralesETC, YY22a, YY22b}), and, more recently, cyber risk (see \cite{BBH21, HLOS22, HL21}). The main advantage of Hawkes processes, compared to Cox processes \cite{Lando} or hidden Markov chains \cite{Mamon}, lies in their parsimony and the fact that the process is directly observable, which makes estimation easier.
A simple example of a Hawkes process is a one-dimensional pure jump process, where the intensity jumps simultaneously with the points of the process. Consequently, the intensity itself is a one-dimensional Markov process with observable large jumps, and only the jump law and mean-reversion speed need to be estimated (see, for instance, \cite{BerBriScoSga21, CMS}). In \cite{HwRains}, the authors motivate modeling rainfall with marked Hawkes processes with uncontrolled density, due to the nature of the problem. See Ogata \cite{Ogata} for estimation methods and Dassios and Zhao \cite{DZ} for simulation procedures.
Both simulations \cite{DZ} and theoretical results \cite{CMS, JMSS19} show that Hawkes intensity naturally exhibits a two-phase structure: short periods of high intensity, characterized by many concentrated jumps, interspersed with very long periods of low intensity and few jumps. This feature allows the Hawkes setup to capture both extreme rainfall events and prolonged periods of drought.

A particularly challenging constraint in electricity production arises from the requirement to maintain a fixed grid frequency. In cases where frequency differs across regions, such as in Brazil and Paraguay, the hydroelectric plant at Itaipu is split into two parts: the Paraguayan side operates at 50 Hz, while the Brazilian side operates at 60 Hz.
This issue, which dates back to the early 20th century, arises because the power grid and all connected devices are designed to operate at a specific, unified frequency. Maintaining this frequency imposes an implicit constraint on production units, often requiring an on-off strategy for electricity generation. Moreover, since the produced electricity depends solely on the turbine's rotational speed, within certain physical and engineering limits, it is clear that electricity production is effectively fixed when the turbine is running. Water consumption is then adjusted to maintain the rotational speed using a servomechanism, as described in \cite{ChDiGaGiSc}.

The optimal frequency is maintained by regulating water flows using servomechanisms. The control variables include the status of the turbines (open or closed), the switching times, and the operation of spillways, which allow water to pass through without generating electricity. Optimal switching and stopping problems have been extensively studied in mathematical finance, particularly in the context of investment policies (see, for instance, \cite{chegaily, ChLyRocSco, ChDiGaGiSc, LyVath-Pham, LyVath-Pham-Villeneuve}).

This article represents a step forward compared to \cite{ChDiGaGiSc}, essentially by introducing and emphasising a discontinuous SDE to model the water level in the reservoir.
In \cite{ChDiGaGiSc}, a similar setup is considered: a dam produces electricity, and the manager seeks to maximise revenue from power production while keeping the water level under control. However, in \cite{ChDiGaGiSc}, the water level was modeled solely as a diffusion process, i.e., a continuous process, and the problem was formulated as an optimal switching problem. In this manuscript, to study the effects of discontinuous contributions more deeply, we do not include a Brownian component; the SDE for the water level is driven exclusively by a marked Hawkes-type jump process.

The two modeling approaches can be justified by the existence of different types of dams worldwide. In particular, large-scale dams with very large catchment areas, such as the Three Gorges Dam in China or the Hoover Dam on the Colorado River, are naturally associated with water levels that evolve in a relatively smooth and continuous manner. By contrast, small dams located in alpine regions have very small catchment areas, where local rainfall events can induce abrupt and significant variations in water levels.
Consequently, although this paper and \cite{ChDiGaGiSc} share some aspects, they focus on two different types of dams with very different parameters and modeling setups. Moreover, the most relevant innovation of this paper is the clustering effect captured by Hawkes processes, a feature that cannot be easily incorporated into a Brownian framework.

Consequently, the stochastic intensity of the Hawkes process describing rainfall must be treated as a state variable. In contrast, we do not consider the electricity price as a state variable, as was done in \cite{ChDiGaGiSc}. Including the price as a third state variable could be handled exactly as in \cite{ChDiGaGiSc} using a numeraire technique, which reduces the state dimension from three (electricity price, reservoir level, and intensity) to two (reservoir level and intensity). For the sake of readability and to avoid unnecessary repetition of results already developed in \cite{ChDiGaGiSc}, in this paper we present the model directly in two dimensions. The change of numeraire affects only the monetary quantities appearing in the objective functional, namely revenues, switching costs and the value function itself. Physical quantities such as the reservoir level, rainfall intensity, spillway controls and environmental-flow constraints remain unchanged.
The dam manager must maximise revenue from power production while ensuring that water levels remain within prescribed limits: they cannot exceed a certain threshold for safety reasons, nor fall below a minimum threshold. Moreover, during long periods of drought, the dam must guarantee a minimal outflow to preserve biodiversity in rivers and support agriculture in the downstream basin.

Thus, under the same assumptions as in \cite{ChDiGaGiSc}, and motivated by the need to emphasise the effects of drought, it is reasonable to reduce the dimensionality of the problem to a two-dimensional state space by interpreting the Hawkes process intensity as a time clock.
Hawkes processes represent the main novelty of this paper, and we particularly highlight the impact of the self-exciting parameter on the optimal management policy. Specifically, we focus on the optimal water level in the artificial lake behind the dam, defined as the point at which it is optimal to operate the spillway system.

We emphasize that the self-exciting parameter increases both the magnitude of extreme events, such as heavy rainfall during a ``water bomb," and the duration of very low-intensity periods, such as dry spells (see, for instance, Jiao et al. \cite[Section 5]{JMS17}). Consequently, both the risk of dam overtopping and the risk of water shortages are amplified as the self-exciting parameter increases, reflecting the current challenges posed by climate change. However, the optimal dam management response is not straightforward: overtopping risk calls for lowering the reservoir level, while prolonged dry spells necessitate maintaining a higher water level to mitigate drought effects.

For the parameter configurations considered in our numerical experiments, the optimal policy reacts more strongly to overtopping risk than to drought risk:
the optimal water level in the artificial lake decreases, even though the risk of water scarcity is simultaneously amplified. In other words, the dual role of the dam, as both flood protection and water supply, is disrupted under the current context of climate change, where prolonged dry spells become increasingly likely.
%
%

We characterize the optimal policy using viscosity solutions of the associated system of Hamilton-Jacobi-Bellman partial integro-differential inequalities. The solution is then obtained using a numerical scheme adapted from controlled Markov chain problems, see \cite{KushDup}.
The paper is organized as follows: in Section \ref{Model}, we introduce the model, in Section \ref{The Optimal Control Problem} we formulate the optimal control problem with the associated HJB equations. Section \ref{optimal-switching} focuses on the optimal switching formulation. In Section \ref{numericalResu}, we present numerical examples and, finally, we conclude with Section \ref{concluding-remarks}.
 
 \section{The rainfalls and dam model}\label{Model}
 
 Let us consider a dam and denote the water level inside the dam $H_t$ as a function of time.
We consider a filtered probability space $(\Omega, \mathcal{F},  \mathbb{F}=\left(  \mathcal{F}_t\right)_{t\geq 0}, \mathbb{P})$ satisfying the usual hypotheses, equipped with a point process.

Let be given the following constants belonging to the characteristic of the power production dam: $h_0<0<h_-<h_+<h^{max}$. $h_0$ denotes the position of the turbine with respect to the  bottom of the reservoir and it is then generally negative, 
$h_-$ and $h_+$ are the safety thresholds, respectively dangerous minimal and maximal water level of the dam, $h^{max}$ denotes the maximal water level inside the dam.

Focusing on mountain dams, the water height $\{H_t\}_{t\geq 0}$ is affected by the external contribution provided, essentially, by rain and torrents which flow is extremely uneven. The inflow is often negligible during dry periods and huge when storms occur. 
The climate change has moreover increased the length of dry periods and storms occur by clusters as the old saying ``when it rains, it pours".
For this reason we replace the usual Brownian motion for the inflow, see for instance Chevalier et al. \cite{ChDiGaGiSc}, by a compounded process $\{Z_t\}_{t\geq 0}$ driven by a counting process $\{J_t\}_{t\geq 0}$ with a Hawkes intensity $\{\lambda_t\}_{t\geq 0}$, that is the process $J$ counts the jump times $(\Theta_i)_{i\in \N}$, i.e. $J_t=\sum_{n\geq 1}\1_{t\geq \Theta_n}$\footnote{Notice that the counting process $J$ is defined via its intensity $\lambda$ , which
in turn depends on the history of $J$. So an apparent logical loop seems to arise
about the existence of $\lambda$. We refer to Brachetta et al \cite[Section 2.1]{BCCS} for a rigorous
construction of the model based on an equivalent change of probability measure insuring the well definition of the model. }.
Let $(z_i)_{i\in \N}$  be the independent identically distributed marks with distribution $\pi$, a positive square integrable function describing the law of rainfall during storms, its domain will be denoted by $\Pi\subseteq \mathbb{R}^{+}\setminus{\{0\}}$.
The sequence of random jump times $(\Theta_i)_{i\in \N}$ represents the arrival of a rainfall whereas the marks $(z_i)_{i\in \N}$ are associated with the impact on the level of the dam. 
The process $Z$ is a non-decreasing  marked point process 
$$
Z_t=\Sum_{i=1}^{J_t} z_i = \Sum_{i\in \N } z_i \1_{i\leq J_t} 
$$ adding the marks up to time $t$. 

The controlled water level dynamics is then given by the following stochastic differential equation for the pair $(H,\lambda)$ 
\begin{equation} \label{H-dynamics}
 \left\{
 \begin{array}{l}
 dH_t =  -I_t \cdot \varphi(H_t) dt - \nu_t dt +dZ_t,  \\
 d\lambda_t =  a (b-\lambda_t) dt + c \,  dZ_t,  \\
  H_o=h\in [0, h^{max} ]\\
   \lambda_0=\ell \in [b,+\infty)
 \end{array}
 \right.
 \end{equation}
 where $I_t$ denotes the turbine status at time $t$ and it is assumed to be a control variable; it can take values in $\{0,1\}$,  $I_t=1$ when the turbine is operating at time $t$, while $I_t=0$ describes a closed turbine at time $t$. The second control variable $\nu_t $ denotes the flow of water transiting  spillways status. The compensator of the compounded jump process $Z_t$ reads $\lambda_t \pi (dz) dt$. 
In particular, when the turbine and the spillway are closed,  first SDE in \eqref{H-dynamics} reads $ dH_t = dZ_t$.

We point out that the probability to have a new storm increases due to the self-exciting property of the Hawkes processes. 
The three parameters $a$, $b$ and $c$ are positive: $a$ represents the speed reversion to the long-term mean value $b$ and $c$ the self exciting effect of each rainfall.

As in \cite{ChDiGaGiSc}, the deterministic function $\varphi$ appearing in \eqref{H-dynamics} can be determined by physical arguments: it is given by the product of the open section $Q_t>0$ of the penstock connecting the basin with the turbine and the speed $v_t$ of the water at the entrance of the conduct divided by
the surface $S$ of the basin assumed to be independent from the height $h$. The assumption of surface $S$ constant is motivated by considering an abstract class of dams for which the bassin $S$ can be supposed constant.  Indeed that speed $v_t$ can be expressed as a function of the height by the energy conservation principle; we can then write:
\begin{equation}
\varphi(H_t)=Q_t\frac{v_t}{S} =Q_t\frac {\sqrt{2g(H_t-h_0)}}{S}
\end{equation}
where $g$ denotes the gravity constant.
The power produced, in terms of energy units, is given by the following expression:
\begin{equation}	\label{eq3.2}
\mathcal E_t= Q_t v_t g (H_t-h_0 ) (1- \chi)=  \varphi(H_t) S g (H_t-h_0 ) (1- \chi) = \sqrt{2} (1- \chi) Q_t  \left[g (H_t-h_0 )\right]^{3/2},
\end{equation}
where $\chi$ is a dispersion coefficient.
The conduct connecting the reservoir with the turbine is designed for the dam and a crucial issue for power production is to keep the frequency of the alternate electric current constant (generally 50Hz in European Countries,  60Hz in American Countries).   Since this is determined by the rotational speed of the turbine, the open section area will be regulated by servo-mechanism in such a way that water flowing per second $Q_t$ will keep constant the rotational frequency of the turbine. That implies that $\mathcal E_t$ could be considered constant as well and in the sequel it will be simply denoted by $\mathcal E$, representing the power per unit of time produced by the turbine when operates.

At the same time we obtain that the volume of water flowing $Q_t$ as inversely proportional to the head of the water power $3/2$ as we seen in \eqref{eq3.2}.
We can then write the effect of power production on the water level as follows:
\begin{equation}
\varphi(H_t)=\frac{Q_t \sqrt{2g (H_t-h_0)}}{S} = \frac{\mathcal E }{S g(1- \chi)} \;\; \frac{1}{H_t-h_0 }.
\end{equation}
That is the impact of the electricity production on the head of water is inversely proportional to itself.

By a similar argument the spillover impact $\nu_t$ is proportional to the section of spillover pipe and the square root of the free heigh $(H_t-h_0)$, where we suppose that the free heigh is the same for spillover and turbine pipes, as in Hoover dam. As a consequence, 
the spillover impact can be rewritten as $\beta_t \sqrt{2g (H_t-h_0)}$, where $\beta_t$  does not depend of $H_t$ and  takes values in the interval $[\beta^{min}, \beta^{max}]$, where $\beta^{min}\geq 0$ will be specified in \eqref{contraint-beta}  and $\beta^{max}>\beta^{min}$ describes the spillways opening with maximum flow, which depends on physical constraints of the dam.

We believe that the evolution of the electricity price is not necessary since, being always positive, we always could reduce of the dimension by the numeraire technique as done in \cite{ChDiGaGiSc} . In this way, the problem could be rewritten under a new probability measure equivalent to the historical one under which the price of the electricity is constant, see the seminal paper by Geman et al. \cite{Geman} and the dimension of the state variable decreases by one. The change of probability has been explicitly detailed in the dam management problem in the paper by Chevalier et al. \cite[Section 3.1]{ChDiGaGiSc}.

\section{The dam constraints and the optimal control problem} \label{The Optimal Control Problem}

We formulate our dam management problem as an optimal stochastic control problem. The goal of the dam manager is to maximise the revenues from the power production by keeping the safety level of the dam under control, moreover other external constraints are added.
The reservoir of a dam is not only used to produce electricity but also to guarantee other relevant requests in particular we point to the following usual requests:
\begin{description}
\item[Dam protection:] to keep the water level under control for safety reasons, a penalty is introduced when the water level overcomes the critical value $h_{+} $.
The penalty function, with support $\lbrack  h_{+} , h^{max}\rbrack$, is denoted by $f_+$ and Lipschitz continuous on the same interval.

\item[Touristic level:] the reservoir behind the dam is often used for touristic purposes. If the heigh of the basin is too low there is an impact on touristic use and then 
a penalty is introduced when the water level falls below the critical value $h_{-} $, known in France as  ``Cote Touristique" meaning Touristic level.
The penalty running function, defined on $\lbrack 0,  h_{-} \rbrack$, is denoted by $f_-$ and Lipschitz continuous  on the same interval.

\item[Low-flow period:] during the long periods of drought, the dam contributes to guarantee a minimal flow  $\mu$ in order to preserve biodiversity and to support agriculture.
Since the intensity $\lambda$ also measures, in an indirect way, the lack of rains and then the drought, we add a constraint that a minimal outflow $\mu$ has to be fulfilled (given by the turbine penstock or the spillover) if the rains' intensity $\lambda$ is lower than a threshold $\underline{\ell}$.  
 
\end{description}

Let now $(\tau_k)_{k\geq 1}$ be a non decreasing sequence of $\mathbb F$-stopping times such that $\lim_{k\rightarrow +\infty}\tau_k=+\infty$. At the random and controlled  time $\tau_k$, the dam manager decides to switch the turbine status from state $0$ (no production) to state $1$ (operating turbine) or vice-versa. 
We recall that the sequence  $(\Theta_i)_{i\in\N}$ of random jump times are totally inaccessible. On the contrary random control times $(\tau_k)_{k\geq 1}$ are predictable. 
See the definitions in Jacod and Shiryaev, \cite[Chapter I]{JacShi2003} for totally inaccessible jump times.  We associate a controlled Markov chain taking values in $\{0,1\}$ and denoted by $I$ and, for $t\geq 0$, by
\begin{equation}	\label{It}
 I_0=i\in \{0,1\} \quad \mbox{ and }  \quad I_t :=\frac{1}{2} \Big[  1 - (-1)^{ I_0 + N_t } \Big] , \quad 
\mbox{ where } \quad N_t := \sum_{k=1}^{\infty} \mathbbm{1}_{ \tau_{k} \leq t }.
\end{equation}

\begin{Remark}	\label{rem3.1}
Two sequences of random time $\{\Theta_i \}_i$ and $\{\tau_i\}_i$ represents the rains times and the sequence the switching times of the turbine decided by the manager. Those times are completely different by the nature of the problem. Mathematically times $\{\Theta_i\}$ are totally  inaccessible, times $\{\tau_i\}_i$ are predictable and it holds for those times $\{\tau_i \}_i \cap \{\Theta_i\}_i=\emptyset$ a.s..  As \cite{JacShi2003} says at page 20 ``\emph{Now we introduce a class of stopping times that are ``orthogonal" in a sense to all predictable times.}''. 
\end{Remark}

The second control variable $\beta$ is an $\mathbb F$-predictable process.
During low-flow period, we introduce in the model the minimal flow $\mu$ in order to preserve biological or agricultural needs. Since the cross-sectional area $S$ of the artificial lake is assumed to be constant, it follows that the decrease in the water level per unit time inside the reservoir is directly proportional to the water flow released into the downstream river\footnote{The parameter $\mu$ represents a minimum admissible rate of decrease of the reservoir level. Throughout the paper, for simplicity, the reservoir surface is normalised to one. Under this normalisation, $\mu$ is expressed in units of water-level variation per unit time rather than as a volumetric discharge. For a reservoir with physical surface area $S$ the corresponding minimum environmental release would be $Q_{\min}=\mu \cdot S$, where $Q_{\min}$ is measured in volume per unit time. Therefore, $\mu$ should be interpreted as a normalised proxy for environmental-flow requirements, while the actual discharge constraint depends on the physical size of the reservoir. The approximation $Q_{\min}=\mu \cdot S$ is exact when the reservoir surface is assumed constant and should be interpreted as a first-order approximation when the surface depends on the water level.} Requiring a guaranteed minimum outflow is therefore equivalent to controlling a minimum rate of decrease of the water level inside the dam in the absence of rainfall. 

\begin{Remark}
Throughout the paper, the reservoir cross-sectional area is assumed to be constant and normalized to one. Under this convention, the volumetric discharge rate and the corresponding rate of decrease of the reservoir level are numerically identical. Hence the minimum flow parameter $\mu$ may equivalently be interpreted as a minimum outflow requirement or as a minimum rate of decrease of the water level, although the two quantities have different physical dimensions before normalization.
\end{Remark}

We then have

\begin{equation}\label{contraint-beta}
\beta_t \geq \mathbbm{1}_{\lambda_{t-} \leq \underline{\ell} } \max \left\{ \mu -  \varphi(H_{t-}) \mathbbm{1}_{I_{t-} =1 } \, , \, 0 \right\} =:\beta^{min}. 
\end{equation}
We denote by $\mathcal B$ the set of such processes.\\

The problem will be formulated as a stochastic control problem mixing regular and switching controls.
We introduce the following set of admissible controls:
\beq
\mathcal A:=\left\lbrace \alpha=(\beta, (\tau_k)_{k\geq 1}):\ \beta\in\mathcal B\textrm{ and } (\tau_k)_{k\geq 1}\textrm{ is a non decreasing sequence of }\F-\textrm{stopping times}\right\rbrace.
\enq

We denote by $H^\alpha$ the solution of the first controlled SDE in \eqref{H-dynamics} for a given control $\alpha$. When no ambiguity can arise, we will denote $H^\alpha$ by $H$.
When the water level reaches the value $h^{max} > h_{+}$, the maximum allowed by safety reasons, the entire problem ends,
the optimisation problem is terminated and a terminal penalty $P$ is incurred.
This problem is usually called overtopping failure and represents one of the most frequent reason of dam failure, see Tingsanchali and Chinnarasri \cite{Tingsanchali}. Usually overtopping is a consequence of heavy rains as in the case of Laurel Run Dam failure in 1977.
We can then define the time horizon $T^{\alpha}$ of our stochastic control problem as
$ T^{\alpha}=\inf \left\{t>0 \, \vert \,  H^{\alpha}_t\geq h^{max}\right\} $, when no ambiguity can arise, we will denote  it by $T$.

This optimization problem depends naturally on two variables: the water level and the intensity of the Hawkes process describing rainfall probabilities. We focus ont the dry effects represented by the Hawkes process in the proposed model. So we have two value functions associated to the problem and denoted respectively by $v_0$, if the initial status of the turbine is closed, and $v_1$, if the initial status of the turbine is open. 
Both value functions are defined on $D:=[0, h^{max} ]\times [b,+\infty)$, by the following expression:
\begin{equation}\label{eqn-value-function}
\begin{split}
 v_i(h,\ell)& := \sup_{\alpha \in \mathcal{A}} \, \mathbb{E}^{i,h, \ell}\Bigg[ \int_0^{T-} e^{- \rho t} \mathcal E \, I_t  dt
- \int_0^{T-} e^{- \rho t} \kappa \; dN_t  \\
& \quad\quad\quad\quad\quad \quad\quad\quad -  \int_0^{T-} e^{-  \rho t}  \; \Bigg( f_{+}\left(\left(H_t- h_{+}\right)^{+}\right)  + f_{-}\left(\left(h_{-} - H_t\right)^{+}\right) \Bigg) dt -Pe^{- \rho T} \Bigg],
\end{split}
\end{equation}
where $(h,l)=(H_0,\lambda_0)$ in \eqref{H-dynamics}, 
 $I_t $ is defined in \eqref{It}, i.e. $i=I_0$, $\rho$ is the discount factor, $\kappa$
denotes the cost, in electricity, of switching from operating to a closed turbine and $P$ is a fixed cost associated with dam failure.  $\mathbb{E}^{i,h,\ell}$ denotes the expectation 
$\mathbb{E}$ and stress its dependance by values $i,h, \ell$. This is a classical and widely accepted notation in the fields of optimal control.\\

To solve our optimisation problem, we will exploit the following dynamic programming principle
that we write in its classical version for sake of readability. In this context we do not have a priori regularity results on the value function, in particular we do not even know if the value function is continuous. Mainly motivated by not-enough regular value functions, \cite{BouchTouzi2011} proved a so-called weak dynamic programming principle (DPP) in the diffusion case (no jumps). 
They established a sub-(resp., super-) optimality principle of dynamic programming involving its upper-(resp., lower-) semicontinuous envelope $v^*$ (resp. $v_*$) for a value function $v$. 
The diffusive case has been largely studied, we refer to the book \cite[Chapter 3.2]{Touzi}. 
We need a weak formulation of the dynamic programming which avoids the heavy measurable selection arguments in typical proofs of the dynamic
programming principle when no a priori regularity of the value function is known as in this case. 
At our knowledge, the DPP in controlled jump diffusions, as already pointed out in the
introduction of \cite{Ishi2004}, the literature is limited. Some steps forward this direction are currently under work, as in \cite{Harv} or more recently \cite{BondiPriola25}.
The proof is not trivial but it is quite reasonable to admit the following, a detailed proof, that can be adapted to our setup, in the case of Hawkes process is done in the appendix of \cite{Harv} for the more tricky case of singular control.

\begin{Proposition}	\label{DPP}
For all $(h,\ell)\in D$ and any $\F$-stopping times $\theta$, we have
\begin{equation}\label{DPPeq}
\begin{array}{rcr}
 v_i(h,\ell)& =& \displaystyle \sup_{\alpha\in  \mathcal{A}} \, \mathbb{E}^{i,h, \ell}\Bigg[ \int_0^{(\theta\wedge T)} e^{- \rho t} \; \Bigg( \mathcal E \, I_t  -    f_{+}\left(\left(H_t- h_{+}\right)^{+}\right)  - f_{-}\left(\left(h_{-} - H_t\right)^{+}\right) \Bigg) dt \quad 
  \\
&&\displaystyle \quad \quad \quad \quad \quad \quad - \int_0^{(\theta\wedge T)-} e^{- \rho t} \kappa \; dN_t +
e^{-\rho \theta} v_{I_{\theta}} (H_\theta,\lambda_\theta)\mathbbm{1}_{\theta<T} -Pe^{- \rho T} \mathbbm{1}_{\theta\geq T}\Bigg]\\
& =& \displaystyle\sup_{\alpha\in  \mathcal{A}} \, \mathbb{E}^{i,h, \ell}\Bigg[ \int_0^{(\theta\wedge T)} e^{- \rho t} \; \Bigg( \mathcal E \, I_t  -    f_{+}\left(\left(H_t- h_{+}\right)^{+}\right)  - f_{-}\left(\left(h_{-} - H_t\right)^{+}\right) \Bigg) dt \quad   \\
&& \displaystyle \quad\quad \quad \quad \quad \quad  - \int_0^{(\theta\wedge T)-} e^{- \rho t} \kappa \; dN_t + e^{-\rho (\theta\wedge T)} v_{I_{\theta\wedge T}}(H_{\theta\wedge T},\lambda_{\theta\wedge T}) \Bigg].
\end{array}
\end{equation}
\end{Proposition}
%

\begin{Remark}[Dynamic programming for jump processes and Hawkes-type dynamics]

The dynamic programming principle (DPP) for controlled diffusion processes is by now classical.
Under standard measurability and growth assumptions, the value function of a Markov diffusion
satisfies a strong DPP with respect to all stopping times and is characterized as the unique
viscosity solution of the associated Hamilton--Jacobi--Bellman (HJB) equation; see, for instance,
the monographs \cite{FlemingSoner2006,BardiCapuzzo1997,YongZhou1999,Pham2009} and the references therein.

For controlled jump processes the picture is more delicate. For abstract jump Markov and
piecewise deterministic Markov processes one already obtains nonlocal HJB equations in the
viscosity sense, see Soner \cite{Soner1988}. In the case of SDEs driven by L\'evy noise, weak
forms of the DPP, sufficient for the viscosity characterization of integro--differential HJB
equations, are developed in the general framework of Bouchard and Touzi \cite{BouchTouzi2011}
and in the jump--diffusion literature surveyed in \cite{OksendalSulem2019}. Strong DPP results,
where concatenation at stopping times is proved for controlled jump diffusions with both small
and large jumps, require a careful analysis of stochastic flows in the Skorokhod topology and
subtle measurable selection arguments; see in particular the recent work by Bondi and Priola
\cite{BondiPriola25} and also \cite{GoldysWu2016}. Compared with the diffusion case, the
discontinuity of the trajectories and the nonlocal generator make stability with respect to the
control, approximation by piecewise-constant controls and the construction of a regular flow
substantially harder.

In our setting the controlled dynamics combine a diffusion component with a Hawkes-type
self-exciting point process. In the Markovian exponential-kernel case, by augmenting the state
with the current intensity, one can represent the pair $(N_t,\lambda_t)$ as a Markov process,
and it is natural to expect a DPP of the usual form, in analogy with the L\'evy-driven models
treated in \cite{OksendalSulem2019,BouchTouzi2011,BondiPriola25,GoldysWu2016}. However, to the best of our
knowledge there is no general theorem covering controlled Hawkes processes with state- and
history-dependent intensities at the level of generality considered here. For this reason, and
in view of the probabilistic difficulties mentioned above, we shall assume in
Proposition \ref{Prop4.1} and Theorem \ref{THMValue} that a classical DPP holds for our Hawkes-driven control problem. This
assumption is consistent with the Markov structure of the augmented state and with the available
results for jump diffusions, and it is only used to justify the HJB system which characterizes
the value function.
\end{Remark}

From the above dynamic programming principle, we may derive the HJB equation associated with the optimal switching control problem \eqref{eqn-value-function}.
The proposed variational inequality corresponding to the optimal stochastic control is the following
\begin{equation}\label{HJB}
\min \Bigg\{  \rho v_i  - \sup_{\beta  \in\mathcal B} { \mathcal{L}^{(i,\beta)} v_i} +  f_{+} \left(\left(h- h_{+}\right)^{+}\right) +  f_{-} \left(\left(h_{-}- h\right)^{+}\right)  -i \mathcal E \; ; \; \left(v_{i} - v_{ 1-i } - \kappa \right) \Bigg\} =0,\quad\textrm{ on }\mathring D,
\end{equation}
where the associated integral first order differential operator, called Lagrangian $  \mathcal{L}^{(i,\beta)} $, is defined as follows:
\begin{equation} \label{operator-L}
\begin{split}
\mathcal{L}^{(i,\beta )} \phi(h,\ell) =
  \left [ - i  \frac{\mathcal E }{S g(1- \chi)} \frac{1}{h - h_0 }
  - \beta\sqrt{2 g (h - h_0 )^+} \right] \, \frac{\partial \phi}{\partial h}(h,\ell)
   \quad \quad \quad   \quad\quad  \\
 \quad  + a(b- \ell)\, \frac{\partial \phi}{\partial \ell}(h,\ell) + \ell \int_{\Pi} \Big[ \phi(h+z,\ell+cz) - \phi(h,\ell) \Big] \pi(dz).
\end{split}
\end{equation}
Optimizing the free control $\beta$ according with the constraint \eqref{contraint-beta}, we obtain 
\begin{equation} \label{operator-Lsup}
\begin{array}{rcl}
\displaystyle \sup_{\beta \in \mathcal{B} }\mathcal{L}^{(i,\beta )} \phi(h,\ell) 
&=& \displaystyle - i \frac{\mathcal E }{Sg (1- \chi )(h - h_0)}   \frac{\partial \phi}{\partial h} 
  + \beta^{max} \, \sqrt{2g( h - h_0)^+ }
 \left(  \frac{\partial \phi}{\partial h} \right)^- \\
 &&\displaystyle +a(b- \ell)\, \frac{\partial \phi}{\partial \ell} 
 +  \ell \int_{\Pi} \Big[ \phi(h+z,\ell+cz) - \phi(h,\ell) \Big] \pi(dz)  
  \\
  & &\displaystyle  -    \left( \mu -  i \frac{\mathcal E }{S g(1- \chi)} \;\; \frac{1}{h-h_0 }  \right)^+ \;  \left(  \frac{\partial \phi}{\partial h} \right)^+ \; \mathbbm{1}_{\ell \leq \underline{\ell} }   \ .
\end{array}
\end{equation}

\section{Characterization as Viscosity Solution}  \label{optimal-switching}

In the present section we characterize the solution of the problem formulated in the previous section as a viscosity solution of the following HJB equation
\begin{equation} \label{eq-HJB}
\begin{array}{rcl}
0 & =& \displaystyle \min \left\{  \rho v_i  + i \frac{\mathcal E }{Sg (1- \chi )(h - h_0)}   \frac{\partial v_i}{\partial h} - \beta^{max} \, \sqrt{2g( h - h_0)^+ }
 \left(  \frac{\partial v_i}{\partial h} \right)^- \right.  \\
&& \displaystyle \quad\quad\quad
 - a(b- \ell)\, \frac{\partial v_i}{\partial \ell} -  \ell \int_{\Pi} \Big[ v_i(h+z,\ell+cz) - v_i(h,\ell) \Big] \pi(dz)  \\
&& \displaystyle \quad\quad\quad  +   \left( \mu -  i \frac{\mathcal E }{S g(1- \chi)} \;\; \frac{1}{h-h_0 } \right)^+ \;  \left(  \frac{\partial v_i}{\partial h} \right)^+ \;
\mathbbm{1}_{\ell \leq \underline{\ell} }
\\
 && \displaystyle \quad\quad\quad + f_+ \left((h- h_{+})^{+}\right) + f_- \left((h_- - h)^{+}\right) - i \mathcal E \; ; \; v_i - v_{1-i} + \kappa \Big\}
 \end{array}
\end{equation}
in the interior of $D$. We also have the boundary condition $v_i(h^{max}, \ell)=-P$.\\
%
%
%
We start with a standard comparison result, which says that any smooth function, which is a supersolution to the HJB equation \eqref{eq-HJB}, dominates $v_i$.
\begin{Proposition} 	\label{Prop4.1} 
Let $i\in \{0,1\}$, $\phi_i \in C^1(D;\R)$ be such that  $\phi_i (h^{max},\ell)\geq -P$ and 
supersolution of \eqref{eq-HJB} on $D$, i.e.
\begin{equation}		\label{Eq34}
\begin{split}
& \min \left\{  \rho \phi_i(h,l)  + i \frac{\mathcal E }{Sg (1- \chi )(h - h_0)}   \frac{\partial \phi_i}{\partial h} (h,l) - \beta^{max} \, \sqrt{2g( h - h_0)^+ }
 \left(  \frac{\partial \phi_i}{\partial h}(h,l)  \right)^- \right. \\
& \quad\quad\quad
 - a(b- \ell)\, \frac{\partial \phi_i}{\partial \ell}(h,l)  -  \ell \int_{\Pi} \Big[ \phi_i(h+z,\ell+cz) - \phi_i(h,\ell) \Big] \pi(dz)  \\
& \quad\quad\quad  +  \left( \mu -  i \frac{\mathcal E }{S g(1- \chi)} \;\; \frac{1}{h-h_0 } \right)^+ \; \left(  \frac{\partial \phi_i }{\partial h} (h,l)  \right)^+ \;  \mathbbm{1}_{\ell \leq \underline{\ell} }
+ f_+ \left((h- h_{+})^{+}\right) 
\\
 & \quad\quad\quad + f_- \left((h_- - h)^{+}\right) - i \mathcal E \; ; \; \phi_i (h,l) - \phi_{1-i}(h,l)  + \kappa \Big\} \geq 0  \quad \forall (h,\ell)\in D\ .
 \end{split}
\end{equation}
Then we have $\phi_i(h,\ell)\geq v_i(h, \ell)$ for all $(h,\ell)\in D$.
\end{Proposition}
\begin{proof}
Let consider an initial state-regime value $(h,\ell;i)\in D\times \{ 0,1\}$. 
Take an arbitrary control $\alpha=\left( (\beta_t)_{t\geq 0}, \{ \tau_n \}_{n\in \N} \right)$. We set $$\tilde \tau_{j}:=\tau_j \wedge T\wedge 
\inf\left\{t\geq 0:\, \max\{H_t,\lambda_t\} \geq j \right\} 
$$ for any $j\in \N$. 
We then apply a change of variables formula and It\^o formula in \cite[Chapter II, Theorems 31,33]{Protter} to $e^{-\rho t} \phi_{I_t}\left(H^{(h,\ell;i)}_t, \lambda^{(h,\ell;i)}_t\right)$ for càd-làg semimartingales between finite stopping times $\tilde \tau_n$ and $\tilde \tau_{n+1}$. 
For the sake of simplicity reason we will denote $\left(H^{(h,\ell;i)}_t, \lambda^{(h,\ell;i)}_t\right)$ simply by $\left(H_t, \lambda_t\right)$. 
Applying between $\tilde\tau_0=0$ and $\tilde \tau_1^{-}$ we get 
\[
\begin{split}
e^{-\rho \tilde\tau_{1}}\phi_{i}\left(H_{\tilde \tau_{1}^-}, \lambda_{\tilde \tau_{1}^-} \right)=& \phi_{i}\left(h, \ell \right)+\int_{0}^{\tilde \tau_{1}} e^{-\rho t} \left( \mathcal{L}^{(i,\beta )} \phi_{i}-\rho \phi_{i}\right)\left( H_{t^-}, \lambda_{t^-} \right) dt\\
&-\int_{0}^{\tilde \tau_1} e^{-\rho t} \lambda_{t^-} \int_{\Pi} \left[ \phi_i\left(H_{t^-}+z,\lambda_{t^-}+cz\right) - \phi_i \left( H_{t^-},\lambda_{t^-}\right)\right] \pi(dz) \, dt\\
&+\sum_{0\leq t< \tilde\tau_1} e^{-\rho t} 
\left\{\phi_i\left(H_t,\lambda_t \right) -\phi_i\left(H_{t^-}, \lambda_{t^-} \right) 
\right\}
\end{split}
\]
Analogously between $\tilde\tau_n$ and $\tilde\tau_{n+1}^{-}$ we get
\begin{equation}	\label{tre}
\begin{split}
e^{-\rho \tilde\tau_{n+1}}
\phi_{I_{(\tilde\tau_{n+1})^-}  }   
\left( H_{\tilde \tau_{n+1}-}, \lambda_{\tilde \tau_{n+1}-} \right)
&
=e^{-\rho \tilde\tau_{n}} \phi_{I_{\tilde\tau_n}}\left(H_{\tilde\tau_n}, \lambda_{\tilde\tau_n} \right)+\int_{\tilde \tau_n}^{\tilde \tau_{n+1}} e^{-\rho t} \left( \mathcal{L}^{(I_{\tilde\tau_n},\beta )} \phi_{I_{\tilde\tau_n}}-\rho \phi_{I_{\tilde\tau_n}}\right)
\left(H_{t^-},\lambda_{t^-}\right) dt\\
&-\int_{\tilde\tau_n}^{\tilde \tau_{n+1}} e^{-\rho t} \lambda_{t^-} \int_{\Pi} \left[ \phi_{I_{\tilde\tau_n}}\left(H_{t^-}+z,\lambda_{t^-}+cz\right) - \phi_{I_{\tilde\tau_n}}\left( H_{t^-},\lambda_{t^-}\right)\right] \pi(dz) \, dt\\
&+\sum_{\tilde\tau_n \leq t< \tilde\tau_{n+1}} e^{-\rho t} 
\left\{\phi_{I_{\tilde\tau_n}}\left(H_t,\lambda_t \right) -\phi_{I_{\tilde\tau_n}}\left(H_{t^-}, \lambda_{t^-} \right) 
\right\}
\end{split}
\end{equation}
Considering that $H_t, \lambda_t$ have same jump times, it is possible to rewrite the last two terms as an integral with respect to the compensated process, i.e.
\begin{equation}	\label{trE}
\begin{split}
&\sum_{\tilde\tau_n\leq  t< \tilde\tau_{n+1}} 
\left\{\phi_{I_{\tilde\tau_n}}\left(H_t,\lambda_t \right) -\phi_{I_{\tilde\tau_n}}\left(H_{t^-}, \lambda_{t^-} \right) \right\} e^{-\rho t}  \\
&\quad \quad \quad -\int_{\tilde\tau_n}^{\tilde \tau_{n+1}} e^{-\rho t} \lambda_{t^-} \int_{\Pi} \left[ \phi_{I_{\tilde\tau_n}}\left(H_{t^-}+z,\lambda_{t^-}+cz\right) - \phi_{I_{\tilde\tau_n}}\left( H_{t^-},\lambda_{t^-}\right)\right] \pi(dz) \, dt
\\
&= \int_{\tilde\tau_n}^{ \tilde\tau_{n+1}}  \left\{\phi_{I_{\tilde\tau_n}}\left(H_t,\lambda_t \right) -\phi_{I_{\tilde\tau_n}}\left(H_{t^-}, \lambda_{t^-} \right) \right\} e^{-\rho t} d\widetilde J_t
\end{split}
\end{equation}
where $\{\widetilde{J}_t\}_{t\geq 0}:=\{J_t-\int_0^t\lambda_s ds\}_{t\geq 0}$ is the compensated counting process, i.e. the  $(\mathbb{P},\mathbb{F})$-local martingale associated to $J$. 
The integrands are bounded due to the localisation on $[{\tilde\tau_n}, {\tilde\tau_{n+1}})$, so $\widetilde J$ is a true martingale with zero mean. Finally replacing \eqref{trE} in relation \eqref{tre} and taking the expectation we get 
\begin{equation}	\label{eq33}
\begin{split}
\mathbb{E}\left[  e^{-\rho \tilde\tau_{n+1}}
\phi_{I_{\tilde\tau^-_{n+1}}  }   
\left( H_{\tilde \tau^-_{n+1}}, \lambda_{\tilde \tau^-_{n+1}} \right)
 \right]
 = &\mathbb{E}\left[  e^{-\rho \tilde\tau_{n}} \phi_{I_{\tilde\tau_n}}\left(H_{\tilde\tau_n}, \lambda_{\tilde\tau_n} \right) \right]\\
 &+\mathbb{E}\left[ \int_{\tilde \tau_n}^{\tilde \tau_{n+1}} e^{-\rho t} \left( \mathcal{L}^{(I_{\tilde\tau_n},\beta )} \phi_{I_{\tilde\tau_n}}-\rho \phi_{I_{\tilde\tau_n}}\right)\left( H_{t^-}, \lambda_{t^-} \right) dt \right]\\
 &+\mathbb{E}\left[  \int_{\tilde\tau_n}^{ \tilde\tau_{n+1}}  \left\{\phi_{I_{\tilde\tau_n}}\left(H_t,\lambda_t \right) -\phi_{I_{\tilde\tau_n}}\left(H_{t^-}, \lambda_{t^-} \right) \right\} e^{-\rho t} d\widetilde J_t \right]\\
 =&\mathbb{E}\left[ e^{-\rho \tilde\tau_{n}} \phi_{I_{\tilde\tau_n}}\left(H_{\tilde\tau_n}, \lambda_{\tilde\tau_n} \right) \right]\\
 &+\mathbb{E}\left[ \int_{\tilde \tau_n}^{\tilde \tau_{n+1}} e^{-\rho t} \left( \mathcal{L}^{(I_{\tilde\tau_n},\beta )} \phi_{I_{\tilde\tau_n}}-\rho \phi_{I_{\tilde\tau_n}}\right)\left( H_{t^-}, \lambda_{t^-} \right) dt \right]\\
\end{split}
\end{equation}
By \eqref{Eq34} we obtain
\begin{equation*}
\left(
\mathcal{L}^{(I_{\tilde\tau_n},\beta )} 
\phi_{I_{\tilde\tau_n}}-\rho \phi_{I_{\tilde\tau_n} }\right)\left( H_{t^-}, \lambda_{t^-} \right) 
\leq -I_{\tilde\tau_n}\mathcal{E}+ f_+ \left((H_t- h_{+})^{+}\right) + f_- \left((h_- - H_t)^{+}\right) .
\end{equation*}
Then \eqref{eq33} becomes
\begin{equation*}
\begin{split}
\mathbb{E}\left[  e^{-\rho \tilde\tau_{n+1}}
\phi_{I_{\tilde\tau^-_{n+1}}  }   
\left( H_{\tilde \tau^-_{n+1}}, \lambda_{\tilde \tau^-_{n+1}} \right)
 \right]
 \leq & \mathbb{E}\left[ e^{-\rho \tilde\tau_{n}} \phi_{I_{\tilde\tau_n}}\left(H_{\tilde\tau_n}, \lambda_{\tilde\tau_n} \right) \right]\\
 &-\mathbb{E}\left[  \int_{\tilde \tau_n}^{\tilde \tau_{n+1}} e^{-\rho t} \left\{ \mathcal{E}I_{t}- f_+ \left((H_t- h_{+})^{+}\right) - f_- \left((h_- - H_t)^{+}\right)\right\} dt  \right]\\
  \end{split}
\end{equation*}
which is equivalent to
\begin{equation}	\label{eq35}
\begin{split}
\mathbb{E}\left[  e^{-\rho \tilde\tau_{n}}\phi_{I_{\tilde\tau_n}}\left(H_{\tilde\tau_n}, \lambda_{\tilde\tau_n} \right) \right]
\geq &
\mathbb{E}\left[  e^{-\rho \tilde\tau_{n+1}}
\phi_{I_{\tilde\tau_{n+1}^-}  }   
\left( H_{\tilde \tau_{n+1}^-}, \lambda_{\tilde \tau_{n+1}^-} \right)
 \right]\\
 &+
\mathbb{E}\left[  \int_{\tilde \tau_n}^{\tilde \tau_{n+1}} e^{-\rho t} \left\{ \mathcal{E}I_{t}- f_+ \left((H_t- h_{+})^{+}\right) - f_- \left((h_- - H_t)^{+}\right)\right\} dt \right] \ .
\end{split}
\end{equation}		
Recalling that $H$ and $\lambda$ has the same jump times and exploiting the second term in super solution inequality \eqref{Eq34} we obtain 
\begin{equation}	\label{eq2222}
\phi_{I_{\tilde\tau_{n+1}^-}  }   
\left( H_{\tilde \tau_{n+1}^-}, \lambda_{\tilde \tau_{n+1}^-} \right)
\geq 
\phi_{I_{\tilde\tau_{n+1}}}\left(H_{\tilde\tau_{n+1}^-} , \lambda_{\tilde\tau_{n+1}-}  \right) -k=\phi_{I_{\tilde\tau_{n+1}}}\left(H_{\tilde\tau_{n+1}} , \lambda_{\tilde\tau_{n+1}}  \right) -k
\end{equation}
where the equality in \eqref{eq2222} follows by the fact that  $(\tau_{i})_{i \in \N} \cap (\Theta_{i})_{i\in \N}=\emptyset$ a.s., see Remark \ref{rem3.1}.
Taking into account \eqref{eq2222}, \eqref{eq35} gives
\begin{equation*}
\begin{split}
\mathbb{E}\left[  e^{-\rho \tilde\tau_{n}}\phi_{I_{\tilde\tau_n}}\left(H_{\tilde\tau_n}, \lambda_{\tilde\tau_n} \right) \right]
&\geq 
\mathbb{E}\left[  e^{-\rho \tilde\tau_{n+1}} 
\left( \phi_{I_{\tilde\tau_{n+1}}}\left(H_{\tilde\tau_{n+1}} , \lambda_{\tilde\tau_{n+1}} \right) -k \right)
 \right]\\
 &+
\mathbb{E}\left[  \int_{\tilde \tau_n}^{\tilde \tau_{n+1}} e^{-\rho t} \left\{ \mathcal{E}I_{t}- f_+ \left((H_t- h_{+})^{+}\right) - f_- \left((h_- - H_t)^{+}\right)\right\} dt \right] .
\end{split}
\end{equation*}
By iterating the previous inequality from $0$ for all $n$ up to $T$ and recalling that $H_T=h^{max}$ we then obtain
\begin{equation*}
\begin{split}
\phi_i(h,\ell)&\geq 
\mathbb{E}\left[ e^{-\rho T}\phi_{I_T}(H_T,\lambda_T) -k\sum_{\tilde\tau_n\leq T}e^{-\rho \tilde\tau_n}+\int_0^T e^{-\rho t}\left\{ \mathcal{E}I_t - f_+ \left((H_t- h_{+})^{+}\right) - f_- \left((h_- - H_t)^{+}\right)\right\}  dt \right]\\
&
\geq \mathbb{E}\left[ \int_0^T e^{-\rho t}\left\{ \mathcal{E}I_t - f_+ \left((H_t- h_{+})^{+}\right) - f_- \left((h_- - H_t)^{+}\right)\right\}  dt    -\int_{0}^{T^-} ke^{-\rho t }dN_t  -Pe^{-\rho T} \right] .
\end{split}
\end{equation*}
By the arbitrariness of the control  $\{\{ \beta_t \}_{t\geq 0}, \{\tau_k\}_{k\in\N}\} $ we obtain the required result.
\end{proof}

\subsection{Existence of solutions} 

We have the following PIDE characterization of the value functions $v_i$

\begin{Theorem}	\label{THMValue}
The value functions $v_i$ are viscosity solutions on $\mathring D$ to \eqref{eq-HJB} such that $v_{i}(h^{max}, \ell)=-P$.
\end{Theorem}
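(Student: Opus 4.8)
The plan is to prove separately that each $v_i$ is a viscosity supersolution and a viscosity subsolution of \eqref{eq-HJB}, using the dynamic programming principle \eqref{DPPeq} as the single engine and the It\^o--Dynkin decomposition already carried out in the proof of the Proposition as the computational tool. The boundary value is immediate: when $h=h^{max}$ one has $T^{\alpha}=0$ for every admissible control, so \eqref{eqn-value-function} collapses to the terminal term $-Pe^{-\rho T}=-P$, giving $v_i(h^{max},\ell)=-P$. Since the Hawkes driver is a finite-activity marked point process, I would use the viscosity formulation in which the integral term is evaluated at $v_i$ itself while a smooth test function $\phi$ is used only for the local first-order terms. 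The natural device that produces exactly this nonlocal term is to localize every limiting argument at the first jump time, so that the post-jump continuation value supplied by \eqref{DPPeq} is precisely $v_i$ at the shifted point, rather than $\phi$.

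For the supersolution property, let $\phi\in C^1$ touch $v_i$ from below at an interior point $(h,\ell)$, i.e. $v_i-\phi$ has a local minimum there with $v_i(h,\ell)=\phi(h,\ell)$. The obstacle half is immediate from admissibility of an instantaneous switch, which costs $\kappa$ and moves the controller into regime $1-i$, hence $v_i(h,\ell)\ge v_{1-i}(h,\ell)-\kappa$, that is $v_i-v_{1-i}+\kappa\ge 0$. For the differential half, fix $\beta\in\mathcal B$ and apply \eqref{DPPeq} with the control that never switches, over the horizon $\theta:=\theta_\varepsilon\wedge\Theta_1$, where $\theta_\varepsilon$ is the exit time from a small ball and $\Theta_1$ the first rainfall time; this yields $\phi(h,\ell)=v_i(h,\ell)\ge \mathbb E\big[\int_0^{\theta}e^{-\rho t}(i\mathcal E-f_+-f_-)\,dt+e^{-\rho\theta}v_i(H_\theta,\lambda_\theta)\big]$. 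On $[0,\theta)$ the state moves continuously, so the fundamental theorem of calculus applied to $e^{-\rho t}\phi$ produces only the local part of $\mathcal L^{(i,\beta)}$, while the contribution of the single possible jump at $\theta$ is, to first order, the compensator $\ell\int_\Pi\big(v_i(h+z,\ell+cz)-v_i(h,\ell)\big)\pi(dz)$, the post-jump value being $v_i$ by the Markov property; the no-jump boundary term equals $v_i-\phi\ge 0$ and is discarded in the favorable direction. Dividing by $\mathbb E[\theta]$ and letting $\varepsilon\downarrow 0$ gives, for every $\beta$, the inequality $\rho\phi-\mathcal L^{(i,\beta)}\phi-i\mathcal E+f_++f_-\ge 0$ (with the integral carried by $v_i$); taking the infimum over $\beta$, i.e. the supremum in the Lagrangian, recovers the first entry of the $\min$ in \eqref{eq-HJB}, and together with the obstacle inequality this is the supersolution property.

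The subsolution property I would argue by contradiction. Let $\phi\in C^1$ touch $v_i$ from above at an interior point $(h_0,\ell_0)$ and suppose the left-hand side of \eqref{eq-HJB} equals some $\eta>0$ there; taking $\ell_0\ne\underline\ell$ so the coefficients are continuous, both entries of the $\min$ exceed $\eta/2$ on a neighborhood $\mathcal N$. Strict positivity of $v_i-v_{1-i}+\kappa$ makes an immediate switch strictly suboptimal, so an $\varepsilon$-optimal control cannot switch before leaving $\mathcal N$; applying \eqref{DPPeq} over $\theta:=\theta_{\mathcal N}\wedge\Theta_1\wedge\delta$ and decomposing according to whether the interval ends by a jump, by exiting $\mathcal N$, or at the deterministic time $\delta$, the same first-jump localization converts the strict inequality $\rho\phi-\mathcal L^{(i,\beta)}\phi-i\mathcal E+f_++f_-\ge\eta/2$ (now with $v_i\le\phi$ near $(h_0,\ell_0)$) into $v_i(h_0,\ell_0)\le \mathbb E[\cdots]-c\,\eta<v_i(h_0,\ell_0)$ for a positive constant $c$ and $\delta$ small, contradicting \eqref{DPPeq}. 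Hence $\min\{\cdots\}\le 0$ at every such contact point.

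The main obstacle is the nonlocal term: naively differentiating a test function along the trajectory evaluates it at the post-jump states, whereas \eqref{eq-HJB} requires $v_i$ there. This is exactly why every limiting argument is localized at the first jump $\Theta_1$, so that the integral term is generated by the continuation value $v_i$ and not by $\phi$; the finite activity of the Hawkes point process is what makes this legitimate, since the integral operator is then bounded on bounded functions and no small-jump correction is needed. Two further technical points deserve attention in the full write-up: the Hamiltonian is discontinuous across $\{\ell=\underline\ell\}$ through the factor $\mathbbm{1}_{\ell\le\underline\ell}$, so there the viscosity inequalities should be read in the envelope (discontinuous-coefficient) sense or verified via the two one-sided limits in $\ell$; and the arguments above presuppose the local boundedness and the appropriate semicontinuity of $v_i$ (upper for the subsolution, lower for the supersolution), which follow from standard a priori estimates on the controlled dynamics together with the Lipschitz penalties $f_\pm$ and the terminal cost $P$. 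The remaining ingredients, namely the maps $(\partial_h\phi)^{\pm}$, $\big(\mu-i\,\frac{\mathcal E}{Sg(1-\chi)}\frac{1}{h-h_0}\big)^+$ and $\sqrt{(h-h_0)^+}$, are continuous in their arguments and pose no difficulty.
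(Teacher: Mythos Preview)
Your approach is correct and follows the paper's two-step skeleton, but departs from it in the treatment of the nonlocal term. You adopt the mixed viscosity formulation in which the jump integral is carried by $v_i$ and only the local first-order derivatives are tested against $\phi$, producing this by localising at the first rainfall $\Theta_1$ so that the post-jump continuation value is supplied directly by \eqref{DPPeq}. The paper instead uses the classical formulation where the entire operator, integral included, acts on the $C^1$ test function $\varphi_i$: it applies It\^o to $e^{-\rho t}\varphi_i(H_t,\lambda_t)$ up to $\theta_\epsilon\wedge\tau_1\wedge s$, rewrites the jump sum minus its compensator as a zero-mean martingale (bounded thanks to the localisation), and then divides by the deterministic horizon $s$ and sends $s\downarrow 0$. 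Your route avoids needing the test function to be defined globally but presupposes enough regularity of $v_i$ for the integral to make sense, whereas the paper's is the textbook computation. In the subsolution step the paper does not argue that near-optimal controls refrain from switching; it takes an arbitrary control, splits on $\{\tau_1\le\theta_\epsilon\}$ versus $\{\tau_1>\theta_\epsilon\}$, uses the second strict inequality $\varphi_i-\varphi_{1-i}+\kappa>\eta$ to absorb the switching branch, and secures the strictly positive remainder through an explicit deterministic analysis of the exit time (on $[0,\Theta_1)$ the state evolves deterministically, so the continuous-exit time is a deterministic function $g(\epsilon)$ and $\mathbb P[\Theta_1>g(\epsilon)]\ge e^{-\hat\ell\, g(\epsilon)}$); your $\theta_{\mathcal N}\wedge\Theta_1\wedge\delta$ would ultimately need the same lower bound, and the paper shows exactly how to get it. Your flag on the discontinuity at $\ell=\underline\ell$ is a genuine addition---the paper is silent on it.
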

\begin{proof}[Proof of super solution property.] Fix $i\in \{1,0\}$, let $(\hat h, \hat \ell)\in \mathring D$ and $\phi_i\in C^1(D,\R)$ such that $(\hat h, \hat \ell)$ is a minimum of $v_i-\phi_i$ with  $\phi_i(\hat h, \hat \ell)=v_i(\hat h, \hat \ell)$. Of course $(v_i-\phi_i)(h,\ell)\geq 0$ on $D$. 
We have to prove that 
\begin{equation}
\begin{split}
& \min \left\{  \rho \phi_i(\hat h,\hat \ell)  + i \frac{\mathcal E }{Sg (1- \chi )(\hat h - h_0)}   \frac{\partial \phi_i}{\partial h} (\hat h,\hat \ell) - \beta^{max} \, \sqrt{2g( \hat h - h_0)^+ }
 \left(  \frac{\partial \phi_i}{\partial h}(\hat h,\hat \ell) \right)^- \right. \\
& \quad\quad\quad
 - a(b- \hat \ell)\, \frac{\partial \phi_i}{\partial \ell}(\hat h,\hat \ell)  -  \hat \ell \int_{\Pi} \Big[ \phi_i(\hat h+z,\hat \ell+cz) - \phi_i(\hat h,\hat \ell) \Big] \pi(dz)  \\
& \quad\quad\quad   +    \left( \mu -  i \frac{\mathcal E }{S g(1- \chi)} \;\; \frac{1}{\hat h-h_0 }  \right)^+ \;\left(  \frac{\partial \phi_i }{\partial h} (\hat h,\hat \ell) \right)^+ \;  \mathbbm{1}_{\hat \ell \leq \underline{\ell} } \\
 & \quad\quad\quad + f_+ \left((\hat h- h_{+})^{+}\right) + f_- \left((h_- - \hat h)^{+}\right) - i \mathcal E \; ; \; \phi_i (\hat h,\hat \ell) - \phi_{1-i}(\hat h,\hat \ell)  + \kappa \Big\} \geq 0  \ .
 \end{split}
\end{equation}
By taking the convention of an immediate switching control $\tau_0=0$, $\tau_n=\infty$ for $n\geq 1$ and $\theta=0$ in \eqref{DPPeq} we get $v_i(h,\ell)\geq v_{1-i}(h,\ell)-k$ and in particular $v_i(\hat h,\hat \ell) -  v_{1-i}(\hat h,\hat \ell)  + \kappa\geq 0$, then $\phi_i (\hat h,\hat \ell) - \phi_{1-i}(\hat h,\hat \ell)  + \kappa\geq 0$.\\
We have then to show the other inequality. Given an initial regime value $i\in \{0,1\}$ consider the initial state-regime $(\hat h, \hat \ell,i)$ and take an arbitrary control $\alpha=\left( (\beta_t)_{t\geq 0}, \{ \tau_n \}_{n\in \N} \right)$. 
For all $\epsilon >0$, let $B_\epsilon(\hat h, \hat \ell)$ the ball  centred in $(\hat h, \hat \ell)$ of radius $\epsilon$ according with $L^\infty$-distance,
  and we set  $\theta_\epsilon :=\inf \left\{t\geq 0, \left(H^{(\hat h,\hat \ell;i)}_t, \lambda^{(\hat h,\hat \ell;i)}_t\right) \notin B_\epsilon(\hat h, \hat \ell)\right\}$, i.e. the exit time of the pair $\left(H^{(\hat h,\hat \ell;i)}_t, \lambda^{(\hat h,\hat \ell;i)}_t\right)$ from the ball $B_\epsilon(\hat h, \hat \ell)$. 
For the sake of simplicity reason we will denote $\left(H^{(\hat h,\hat \ell;i)}_t, \lambda^{(\hat h,\hat \ell;i)}_t\right)$ simply by $\left(H_t, \lambda_t\right)$. 
Without loss of generality $B_\epsilon(\hat h, \hat \ell)\subset \mathring D$ and then $\theta_\epsilon  < T$. 
We apply the dynamic programming principle \eqref{DPPeq} up to $\theta_\epsilon\wedge \tau_1\wedge s$ where $s>0$ and we obtain 
\begin{equation}		\label{DPPeq1}
\begin{split}
 \phi_i(\hat h,\hat \ell)& = v_i(\hat h,\hat \ell)\\
 &\geq \mathbb{E}^{i,\hat h, \hat \ell}\Bigg[ \int_0^{(\theta_\epsilon\wedge \tau_1\wedge s)} e^{- \rho t} \mathcal E \, I_t  dt
- \int_0^{(\theta_\epsilon\wedge \tau_1\wedge s)^-} e^{- \rho t} \kappa \; dN_t  \\
& \hspace{2cm} -  \int_0^{\theta_\epsilon\wedge \tau_1\wedge s} e^{-  \rho t}  \; \Bigg( f_{+}\left(\left(H_t- h_{+}\right)^{+}\right)  + f_{-}\left(\left(h_{-} - H_t\right)^{+}\right) \Bigg) dt \\
&\hspace{2cm} + e^{-\rho (\theta_\epsilon\wedge \tau_1\wedge s)} v_{I_{\theta_\epsilon\wedge \tau_1\wedge s}} \left(H_{\theta_\epsilon\wedge \tau_1\wedge s},\lambda_{\theta_\epsilon\wedge \tau_1\wedge s} \right)\Bigg]\\
& \geq \mathbb{E}^{i,\hat h, \hat \ell}\Bigg[ \int_0^{(\theta_\epsilon\wedge \tau_1\wedge s)} e^{- \rho t} \mathcal E \, i \; dt + e^{-\rho (\theta_\epsilon\wedge \tau_1\wedge s)} v_{i} \left(H_{\theta_\epsilon\wedge \tau_1\wedge s},\lambda_{\theta_\epsilon\wedge \tau_1\wedge s} \right)
\\
& \hspace{2cm} -  \int_0^{(\theta_\epsilon\wedge \tau_1\wedge s)} e^{-  \rho t}  \; \Bigg( f_{+}\left(\left(H_t- h_{+}\right)^{+}\right)  + f_{-}\left(\left(h_{-} - H_t\right)^{+}\right) \Bigg) dt  \Bigg]\\\end{split}
\end{equation}
where we have used the definition if $\tau_1$ and the fact that $\theta_\epsilon  < T$. A direct application of the It\^o formula in \cite[II Theorems 31,33]{Protter} to 
$e^{-\rho t} \phi_{i}\left(H_t, \lambda_t\right)$ for c\`ad-l\`ag semimartingales between $0$ and $\Theta:= \theta_\epsilon\wedge \tau_1\wedge s$ gives
\[
\begin{split}
e^{-\rho \Theta }\phi_{i}\left(H_{\Theta^-}, \lambda_{\Theta^-} \right)&= \phi_{i}\left(\hat h, \hat \ell \right)+\int_{0}^{\Theta} e^{-\rho t} \left( \mathcal{L}^{(i,\beta )} \phi_{i}-\rho \phi_{i}\right)\left( H_{t}, \lambda_{t} \right) dt\\
&-\int_{0}^{\Theta} e^{-\rho t} \lambda_{t^-} \int_{\Pi} \left[ \phi_i\left(H_{t^-}+z,\lambda_{t^-}+cz\right) - \phi_i \left( H_{t^-},\lambda_{t^-}\right)\right] \pi(dz) \, dt\\
&+\sum_{0< t<\Theta} e^{-\rho t} 
\left\{\phi_i\left(H_t,\lambda_t \right) -\phi_i\left(H_{t^-}, \lambda_{t^-} \right) 
\right\}.
\end{split}
\]
By considering the expectation of previous equality we get 
\begin{equation}   \label{DPPeq2}
\begin{split}
&\mathbb{E}^{i,\hat h, \hat \ell}\Bigg[ e^{-\rho \Theta }\phi_{i}\left(H_{\Theta^-}, \lambda_{\Theta^-} \right)\Bigg]
=\phi_{i}\left(\hat h, \hat \ell \right)+ \mathbb{E}^{i,\hat h, \hat \ell}\Bigg[ \int_{0}^{\Theta} e^{-\rho t} \left( \mathcal{L}^{(i,\beta )} \phi_{i}-\rho \phi_{i}\right)\left( H_{t^-}, \lambda_{t^-} \right) dt\Bigg] .\\
\end{split}
\end{equation}
Combing \eqref{DPPeq1} and \eqref{DPPeq2} we obtain
\begin{equation}   \label{DPPeq3}
\begin{split}
&\mathbb{E}^{i,\hat h, \hat \ell}\Bigg[ \int_{0}^{\Theta} e^{-\rho t} \left\{ \rho \phi_{i}\left( H_{t}, \lambda_{t} \right)-\mathcal{L}^{(i,\beta )} \phi_{i}\left( H_{t}, \lambda_{t} \right)  
-\mathcal E i + \left[ f_{+}\left(\left(H_t- h_{+}\right)^{+}\right)  + f_{-}\left(\left(h_{-} - H_t\right)^{+}\right)  \right]
 \right\} dt\Bigg]\\
 & \geq  \mathbb{E}^{i,\hat h, \hat \ell}\Bigg[ e^{-\rho \Theta} 
 \left\{v_{i} \left(H_{\Theta},\lambda_{\Theta} \right)-\phi_{i}\left(H_{\Theta^-}, \lambda_{\Theta^-} \right)   \right\}\Bigg]\geq 0
%
\end{split}
\end{equation}
From the definition of $\theta_\epsilon $, the integrand in the first line of \eqref{DPPeq3} is bounded. So dividing first term in previous inequality by $s$ and taking $s$ to $0$, we may apply the dominated convergence theorem obtaining 
\begin{equation}
 \rho \phi_{i}\left( \hat h, \hat \ell \right)-\mathcal{L}^{(i,\beta )} \phi_{i}\left( \hat h, \hat \ell \right)  
-\mathcal E i + \left( f_{+}\left(\left(\hat h- h_{+}\right)^{+}\right)  + f_{-}\left(\left(h_{-} - \hat h\right)^{+}\right)  \right)\geq 0
\end{equation}
for the arbitrary control $\alpha$ fixed at the beginning. By taking the supremum over all the admissible controls, we get the second supersolution inequality.

\emph{Proof of subsolution property. } 
We prove the subsolution property, i.e. fix $i\in \{1,0\}$ and let $(\hat h, \hat \ell)\in \mathring D$ and $\phi_i\in C^1(D,\R)$ such that $(\hat h, \hat \ell)$ is a maximum of $v_i-\phi_i$ with  $\phi_i(\hat h, \hat \ell)=v_i(\hat h, \hat \ell)$. Of course $(v_i-\phi_i)(h,\ell)\leq 0$ on a neighbourhood of 
$(\hat h, \hat \ell)$. 
We have to prove that 
\begin{equation}
\begin{split}
& \min \left\{  \rho \phi_i(\hat h,\hat \ell)  + i \frac{\mathcal E }{Sg (1- \chi )(\hat h - h_0)}   \frac{\partial \phi_i}{\partial h} (\hat h,\hat \ell) - \beta^{max} \, \sqrt{2g( \hat h - h_0)^+ }
 \left(  \frac{\partial \phi_i}{\partial h}(\hat h,\hat \ell) \right)^-   \right.  \\
& \quad\quad\quad
 - a(b- \hat \ell)\, \frac{\partial \phi_i}{\partial \ell}(\hat h,\hat \ell)  -  \hat \ell \int_{\Pi} \Big[ \phi_i(\hat h+z,\hat \ell+cz) - \phi_i(\hat h,\hat \ell) \Big] \pi(dz)  \\
& \quad\quad\quad  +    \left( \mu -  i \frac{\mathcal E }{S g(1- \chi)} \;\; \frac{1}{\hat h-h_0 } \right)^+\left(  \frac{\partial \phi_i }{\partial h} (\hat h,\hat \ell) \right)^+ \; \mathbbm{1}_{\hat \ell \leq \underline{\ell} }
\\
 & \quad\quad\quad + f_+ \left((\hat h- h_{+})^{+}\right) + f_- \left((h_- - \hat h)^{+}\right) - i \mathcal E \; ; \; \phi_i (\hat h,\hat \ell) - \phi_{1-i}(\hat h,\hat \ell)  + \kappa \Big\} \leq 0  
 \end{split}
\end{equation}

We prove by contradiction. Suppose that the claim is not true. Then there exist an arbitrary control $\alpha=\left( (\beta_t)_{t\geq 0}, \{ \tau_n \}_{n\in \N} \right)$ and a neighbourhood of $(\hat h, \hat \ell)$ denoted by $B_\epsilon(\hat h, \hat \ell)$ and $\eta >0$ such that the following inequalities hold.
\begin{equation*}
\left\{ 
\begin{array}{rcl}
\displaystyle  \rho \phi_i(\hat h,\hat \ell)  + i \frac{\mathcal E }{Sg (1- \chi )(\hat h - h_0)}   \frac{\partial \phi_i}{\partial h} (\hat h,\hat \ell) - \beta^{max} \, \sqrt{2g( \hat h - h_0)^+ }
 \left(  \frac{\partial \phi_i}{\partial h}(\hat h,\hat \ell) \right)^-  \\
 \displaystyle  \quad\quad\quad
 - a(b- \hat \ell)\, \frac{\partial \phi_i}{\partial \ell}(\hat h,\hat \ell)  -  \hat \ell \int_{\Pi} \Big[ \phi_i(\hat h+z,\hat \ell+cz) - \phi_i(\hat h,\hat \ell) \Big] \pi(dz)  \\
 \displaystyle  \quad\quad\quad  +    \left( \mu -  i \frac{\mathcal E }{S g(1- \chi)} \;\; \frac{1}{\hat h-h_0 }  \right)^+ \left(  \frac{\partial \phi_i }{\partial h} (\hat h,\hat \ell) \right)^+ \;  \mathbbm{1}_{\hat \ell \leq \underline{\ell} }  \\
  \displaystyle \quad\quad\quad + f_+ \left((\hat h- h_{+})^{+}\right) + f_- \left((h_- - \hat h)^{+}\right) - i \mathcal E \;  &>& \eta
 \\
 \displaystyle  \phi_i (\hat h,\hat \ell) - \phi_{1-i}(\hat h,\hat \ell)  + \kappa  &>& \eta
 \end{array}
 \right.
\end{equation*}
Then by regularity of $\phi$ for all $(h,\ell)\in B_\epsilon (\hat h, \hat \ell)$, eventually smaller, we have
\begin{equation}\label{hypAss}
\left\{ 
\begin{array}{rcl}
\displaystyle  \rho \phi_i( h, \ell)  + i \frac{\mathcal E }{Sg (1- \chi )( h - h_0)}   \frac{\partial \phi_i}{\partial h} ( h, \ell) - \beta^{max} \, \sqrt{2g( h - h_0)^+ }
 \left(  \frac{\partial \phi_i}{\partial h}( h, \ell) \right)^-  \\
 \displaystyle  \quad\quad\quad
 - a(b-  \ell)\, \frac{\partial \phi_i}{\partial \ell}( h,\ell)  -   \ell \int_{\Pi} \Big[ \phi_i( h+z, \ell+cz) - \phi_i( h, \ell) \Big] \pi(dz)  \\
 \displaystyle  \quad\quad\quad  +    \left( \mu -  i \frac{\mathcal E }{S g(1- \chi)} \;\; \frac{1}{ h-h_0 }  \right)^+ \left(  \frac{\partial \phi_i }{\partial h} ( h, \ell) \right)^+ \;  \mathbbm{1}_{ \ell \leq \underline{\ell} }  \\
  \displaystyle \quad\quad\quad + f_+ \left((h- h_{+})^{+}\right) + f_- \left((h_- -  h)^{+}\right) - i \mathcal E \;  &>& \eta
 \\
 \displaystyle  \phi_i ( h,\ell) - \phi_{1-i}( h, \ell)  + \kappa  &>& \eta
 \end{array}
 \right.
\end{equation}

We consider the exit time $\theta_\epsilon :=\inf \left\{t\geq 0, \left(H_t, \lambda_t\right) \notin B_\epsilon(\hat h, \hat \ell)\right\}$.
Applying It\^o formula 
to 
$e^{-\rho t} \phi_{i}\left(H_t, \lambda_t\right)$ for c\`ad-l\`ag semimartingales between $0$ and $\gamma_\epsilon=\theta_\epsilon  \wedge \tau^-_1$ and taking the expectation, we obtain
\begin{equation}   \label{DPPeq4}
\begin{split}
&\mathbb{E}^{i,\hat h, \hat \ell}\Bigg[ e^{-\rho( \gamma_\epsilon)}\phi_{i}\left(H_{\gamma_\epsilon^-}, \lambda_{\gamma_\epsilon^-} \right)\Bigg]
=\phi_{i}\left(\hat h, \hat \ell \right)+ \mathbb{E}^{i,\hat h, \hat \ell}\Bigg[ \int_{0}^{\gamma_\epsilon} e^{-\rho t} \left( \mathcal{L}^{(i,\beta )} \phi_{i}-\rho \phi_{i}\right)\left( H_{t^-}, \lambda_{t^-} \right) dt\Bigg] .\\
\end{split}
\end{equation}
Combing hypothesis \eqref{hypAss} and \eqref{DPPeq4} we obtain
\begin{equation}  
\begin{split}
&\mathbb{E}^{i,\hat h, \hat \ell}\Bigg[ e^{-\rho \gamma_\epsilon}\phi_{i}\left(H_{\gamma_\epsilon^-}, \lambda_{\gamma_\epsilon^-} \right)\Bigg]
\leq  \phi_{i}\left(\hat h, \hat \ell \right)+ \mathbb{E}^{i,\hat h, \hat \ell}\Bigg[ \int_{0}^{\gamma_\epsilon} e^{-\rho t} \left( -\eta -\mathcal E i + \left( f_{+}\left(\left(H_t- h_{+}\right)^{+}\right)  + f_{-}\left(\left(h_{-} - H_t\right)^{+}\right)  \right) \right) dt\Bigg] .
\end{split}
\end{equation}
Then 
\begin{equation}  
\begin{split}
\phi_{i}\left(\hat h,  \ell \right)  \geq & \mathbb{E}^{i,\hat h, \hat \ell}\Bigg[ e^{-\rho \gamma_\epsilon}\phi_{i}\left(H_{\gamma_\epsilon^-}, \lambda_{\gamma_\epsilon^-} \right)\Bigg]\\
& +\mathbb{E}^{i,\hat h, \hat \ell}\Bigg[ \int_{0}^{\gamma_\epsilon} e^{-\rho t} \left( \eta +\mathcal E i - \left( f_{+}\left(\left(H_t- h_{+}\right)^{+}\right)  - f_{-}\left(\left(h_{-} - H_t\right)^{+}\right)  \right) \right) dt\Bigg] \\
\geq &\eta \ \mathbb{E}^{i,\hat h, \hat \ell}\Bigg[  \frac{1-e^{-\rho \gamma_\epsilon}}{\rho}\Bigg] +\mathbb{E}^{i,\hat h, \hat \ell}\Bigg[ e^{-\rho\theta_\epsilon }\phi_{i}\left(H_{\theta_\epsilon ^-}, \lambda_{\theta_\epsilon ^-} \right) \1_{\theta_\epsilon \leq  \tau^-_1}\Bigg]\\
& +\mathbb{E}^{i,\hat h, \hat \ell}\Bigg[ e^{-\rho\tau_1}\phi_{i}\left(H_{\tau_1^-}, \lambda_{\tau_1^-} \right) \1_{\tau_1\leq \theta_\epsilon  }\Bigg]
 \\
& +\mathbb{E}^{i,\hat h, \hat \ell}\Bigg[ \int_{0}^{\gamma_\epsilon} e^{-\rho t} \left( \mathcal E i - \left( f_{+}\left(\left(H_t- h_{+}\right)^{+}\right)  - f_{-}\left(\left(h_{-} - H_t\right)^{+}\right)  \right) \right) dt\Bigg] .
\end{split}
\end{equation}
By the fact that $\phi_i\geq v_i$ on $B_\epsilon$ we obtain
\begin{equation}  
\begin{split}
\phi_{i}\left(\hat h, \hat \ell \right)  \geq &  \eta \ \mathbb{E}^{i,\hat h, \hat \ell}\Bigg[  \frac{1-e^{-\rho \gamma_\epsilon}}{\rho}\Bigg] 
  +\mathbb{E}^{i,\hat h, \hat \ell}\Bigg[ e^{-\rho\tau_1}\left( v_{1-i}\left(H_{\tau_1^-}, \lambda_{\tau_1^-} \right)+\eta -k \right) \1_{\tau_1\leq \theta_\epsilon }\Bigg]\\
&  
+  \mathbb{E}^{i,\hat h, \hat \ell}\Bigg[ e^{-\rho\theta_\epsilon }v_{i}\left(H_{\theta_\epsilon ^-}, \lambda_{\theta_\epsilon ^-} \right) \1_{\theta_\epsilon \leq  \tau^-_1}\Bigg]
 \\
& +\mathbb{E}^{i,\hat h, \hat \ell}\Bigg[ \int_{0}^{\gamma_\epsilon} e^{-\rho t} \left( \mathcal E i - \left( f_{+}\left(\left(H_t- h_{+}\right)^{+}\right)  - f_{-}\left(\left(h_{-} - H_t\right)^{+}\right)  \right) \right) dt\Bigg] .
\end{split}
\end{equation}
Identifying the different cases, taking the supremum over all admissible controls $\alpha$ and using the Dynamic Programming Principle \eqref{DPPeq} we have the following inequality.
\[
\phi_{i}\left(\hat h, \hat \ell \right) \geq v_i (\hat h, \hat \ell)+ \eta \; \mathbb{E}^{i,\hat h, \hat \ell}\Bigg[  \frac{1-e^{-\rho \gamma_\epsilon}}{\rho} + e^{-\rho\tau_1}  \1_{\tau_1\leq \theta_\epsilon }  \Bigg].
\]
We now focus on the random time $\theta_\epsilon$. According with the definition of $B_\epsilon$
we have that  $\theta_\epsilon =  \theta^{H+}_\epsilon \wedge \theta^{H-}_\epsilon \wedge \theta^{\lambda+}_\epsilon \wedge \theta^{\lambda-}_\epsilon $, where 
 $\theta^{H+}_\epsilon = \inf\{t \, $ such that $  H_t \geq \hat h + \epsilon \}$, 
 $\theta^{H-}_\epsilon = \inf\{t \, $ such that $  H_t \leq \hat h - \epsilon \}$,
 $\theta^{\lambda+}_\epsilon = \inf\{t \, $ such that $  \lambda_t \geq  \hat \ell + \epsilon \}$ and 
$\theta^{\lambda-}_\epsilon = \inf\{t \, $ such that $  \lambda_t \leq \hat \ell - \epsilon \}$.
For $\epsilon< c/2\min\{z_i \}$, $ \theta^{\lambda+}_\epsilon = \Theta_1$. Moreover $\theta^{H+}_\epsilon \in \{\Theta_i\}_{i\in \mathbb{N}}$. Restricted on $[0, \Theta_1)$, the stochastic processes $H$ and $\lambda$ are deterministic non-increasing continuous function.
Then, restricted to the event $\theta^{H-}_\epsilon \leq \Theta_1$, the random time $\theta^{H-}_\epsilon$ is a deterministic increasing (extended)-function of $\epsilon$ taking values on $(0, \infty]$. A similar argument works for 
$\theta^{\lambda-}_\epsilon$ with the main difference that the 
deterministic increasing function is finite. We will call $g(\epsilon)$ the minimum of these two deterministic functions, that is   $g(\epsilon)$ is the deterministic hitting of $\theta^{H-}_\epsilon \wedge \theta^{\lambda-}_\epsilon$ restricted on $[0, \Theta_1)$.
Focusing on $\Theta_1$, we remark that the first jump of a Hawkes process coincides with the first jump of a time-inhomogeneous Poisson process. Moreover, the intensity of the Hawkes process is not-increasing up to the first jump then we have $\mathbb{P}^{\hat \ell}[\Theta_1>\delta] \geq e^{-\hat \ell \, \delta }$. We now fix $\delta = g(\epsilon)$ and we have
\begin{eqnarray*}
\mathbb{E}^{i,\hat h, \hat \ell}\Bigg[  \frac{1-e^{-\rho \gamma_\epsilon}}{\rho} + e^{-\rho\tau_1}  \1_{\tau_1\leq \theta_\epsilon }  \Bigg] &>&  
\mathbb{E}^{i,\hat h, \hat \ell}\Bigg[  \left\{ \frac{1-e^{-\rho \gamma_\epsilon}}{\rho} + e^{-\rho\tau_1}  \1_{\tau_1\leq \theta_\epsilon } \right\} \1_{\Theta_1>g(\epsilon)}  \Bigg] \\
 &>&  
\mathbb{E}^{i,\hat h, \hat \ell}\Bigg[ \frac{1-e^{-\rho \gamma_\epsilon}}{\rho} \1_{\Theta_1 \wedge \tau_1>g(\epsilon) }  + e^{-\rho\tau_1}  \1_{\tau_1\leq g(\epsilon) \leq \Theta_1 }  \Bigg] \\
&>&  
\frac{1-e^{-\rho  g(\epsilon)}}{\rho} e^{-\hat \ell \, g(\epsilon)} \mathbb{P}^{i,\hat h, \hat \ell}\left[ \tau_1>g(\epsilon) \right]  + e^{-(\rho+\hat \ell)  g(\epsilon)}   \mathbb{P}^{i,\hat h, \hat \ell} \left[ \tau_1\leq g(\epsilon)   \right] 
\end{eqnarray*}
Since $g(\epsilon)>0$ we obtain that there exists $c_0>0$ such that
$ \phi_{i}\left(\hat h, \hat \ell \right) \geq v_i (\hat h, \hat \ell)+ \eta \; c_0 $, 
that is a contradiction with the initial hypothesis. 
\end{proof}

\section{Numerical results}\label{num_sec}		\label{numericalResu}

\setcounter{equation}{0} \setcounter{Assumption}{0} \setcounter{Theorem}{0} \setcounter{Proposition}{0} \setcounter{Corollary}{0}
\setcounter{Lemma}{0} \setcounter{Definition}{0} \setcounter{Remark}{0}
This section deals with approximations of solutions and numerical studies of our optimization problem for dams. The first subsection deals with the numerical scheme used to approximate the HJB equation while the second one discusses the numerical results. 

\subsection{Approximation of solutions}

To solve the HJB equation (\ref{HJB}) arising from the
stochastic control problem (\ref{eqn-value-function}), we choose to use a deterministic approach based on a finite difference scheme which leads
to the resolution of a controlled Markov chain problem. This technique was widely popularised by Kushner and Dupuis (2013).
The convergence of the solution of the numerical scheme towards the solution of the HJB equation, when the space step goes to zero, can be shown using standard arguments, i.e. it satisfies monotonicity, consistency and stability properties. Similar numerical schemes, involving a controlled Markov chain problem, are exploited in operational research, see for instance
 Cao, Li, and Yan (2012), Jin, Yin and Zhu (2012), Parpas et Webster (2014),  Cosso, Marazzina and Sgarra (2015) and Ga\"{\i}gi, Ly Vath and Scotti (2022).

We first localise the problem on a discretised grid. Let $j$ and $k$ be the discretisation steps along the directions $h$ and $\ell$ respectively. We define the space grid as $\Gc_{j,k}:=\{h_{min},h_{min}+j,h_{min}+2j,..,h^{max}\}\times\{\ell_{min},\ell_{min}+k,\ell_{min}+2k,...,\ell_{max}\}$, where $h_{min}$, $h^{max}$, $\ell_{min}$ and $\ell_{max}$ are non-negative constants. We choose a discrete probability distribution for $\pi(dx)$ arising in the HJB equation of a sample space $\{z_1,z_2,z_3\}$ and the associated probabilities $\{\pi_1,\pi_2,\pi_3\}$.\\
For sake of readability we introduce the following quantities and set:
\beqs
& & y_1:=(h+j,\ell),  \quad
y_2:=(h,\ell+k),  \quad
y_3:=(h-j,\ell),  \quad
y_4:=(h,\ell-k),\\
& & y_5:=(\min(h^{max},h+ z_1),\min(\ell_{max},\ell+c z_1)),  \quad
y_6:=(\min(h^{max},h+ z_2),\min(\ell_{max},\ell+c z_2)),\\
& & y_7:=(\min(h^{max},h+ z_3),\min(\ell_{max},\ell+c z_3)), \\
& & \mu^i_h:=\displaystyle  - i  \frac{\mathcal E }{S g(1- \chi)} \frac{1}{h - h_0 }
  - \beta \sqrt{2 g (h - h_0 )}, \quad \mu_{\ell}:= \displaystyle a(b-\ell), \;\\
  & & B(i,h,\ell):=\{\beta_{i,h,\ell}; \beta^{max}\}, \quad \beta_{i,h,\ell}:= \max \left( \mu -  \varphi(h)i ; \, 0 \right)\mathbbm{1}_{\ell \leq \underline{\ell} }
\enqs
 For $(h,\ell)$ in the space grid $\Gc_{j,k}$ we consider approximations of the following form:
\beqs
 \frac{\partial v_i}{\partial h}(h,\ell)&\approx&  \frac{v_i (h+ j,\ell) - v_i (h,\ell)}{j}\mathds{1}_{\mu_h\geq0}-\frac{v_i (h-j,\ell) - v_i (h,\ell)}{j}\mathds{1}_{\mu_h<0},\\
 \frac{\partial v_i }{\partial \ell}(h,\ell)&\approx&   \frac{v_i  (h,\ell+ k) - v_i (h,\ell)}{k}\mathds{1}_{\mu_\ell\geq0}-\frac{v_i  (h,\ell- k) - v_i (h,\ell)}{k}\mathds{1}_{\mu_\ell<0}.
 \enqs

 Thus, using the above notations and applying a finite difference scheme, the HJB equation (\ref{HJB}) can be formulated as the following:
\beq\label{DiscHJB}
v_i(h,\ell)=\max\Bigg\{\max_{\beta\in B(i,h,\ell)}\frac{\sum_{m=1}^{7}p_mv(y_m)+G^{i}_h\Delta t^{h,k}}{1+\rho\Delta t^{h,k}},v_{1-i}(h,\ell)-\kappa\Bigg\},
\enq
where
$$
\begin{array}{rclrcl}
p_1(h,\ell)\!\!\!&:=&\!\!\!\displaystyle \frac{k(\mu_{h}^{i})^{+}}{Q^{j,k}}, \quad &
p_2(h,\ell)\!\!\!&:=&\!\!\!\displaystyle\frac{j\mu_{\ell}^{+}}{Q^{j,k}},\\
p_3(h,\ell)\!\!\!&:=&\!\!\!\displaystyle\frac{k(\mu_{h}^{i})^{-}}{Q^{j,k}}, \quad &
p_4(h,\ell)\!\!\!&:=&\!\!\!\displaystyle\frac{j\mu_{\ell}^{-}}{Q^{j,k}},\\
p_5(h,\ell)\!\!\!&:=&\!\!\!\displaystyle\frac{jk\ell\pi_1}{Q^{j,k}}, \quad &
p_6(h,\ell)\!\!\!&:=&\!\!\!\displaystyle\frac{jk\ell\pi_2}{Q^{j,k}},\\
p_7(h,\ell)\!\!\!&:=&\!\!\!\displaystyle\frac{jk\ell\pi_3}{Q^{j,k}}, \quad & \Delta t^{j,k}(h,\ell)\!\!\!&:=&\!\!\!\displaystyle \frac{jk}{Q^{j,k}},\\
\end{array}
$$
$$
Q^{j,k}(h,\ell):=\displaystyle (|\mu^i_h| k+|\mu_\ell| j)+ jk\ell.
$$
$$ G^{i}_h := i \mathcal{E}- f_{+}\left(\left(h- h_{+}\right)^{+}\right)  - f_{-}\left(\left(h_{-} - h\right)^{+}\right) $$
with the notation $(.)^+$ (resp. $(.)^-$) representing the positive (resp. negative) part  of a given function.\\
To compute explicitly the approximated solution of the discrete problem (\ref{DiscHJB}) we use the following iterative scheme:
\beq\label{IterHJB}
v_i^{(n+1)}(h,\ell)&=&\max\Bigg\{\max_{\beta\in B(i,h,\ell)}\frac{\sum_{m=1}^{7}p_mv_i^{(n)}(y_m)+G^{i}_h\Delta t^{h,k}}{1+\rho\Delta t^{h,k}},v_{1-i}^{(n)}(h,\ell)-\kappa\Bigg\},\\
v_i^{0}(h,\ell)&=&0,
\enq
recalling the Dirichlet boundary condition $v_n(h^{max},\ell)=0$ for all $n$ and Neumann condition on the other bounds.
The above iterative scheme is explicit and fully implementable on the enlarged grid $\Gc_{h,k}^{+}:=\{-j,0,..,h^{max}\}\times\{\ell_{min}-k,..,\ell_{max}+k\}$.\\
Using the following parameters, about 32 seconds are necessary to obtain the approximated value function and policy using Intel$^{\tiny{\hbox{TM}}}$Core i7 CPU at 2.70 Ghz  with 8 GB of RAM.

\subsection{Numerical Results}\label{numerical-examples}

We focus on a dam, which height with respect to the river level is lower than 100 meters, that is the typical height, the tallest dam in the world is Jinping-I with 305 meter height. We focus essentially 
on mountain dams with a limited volume and a reduced inflow area, that is the usual situation in high mountain areas like the alps in Europe and the West coast in America. The mountain shape and the reduced inflow area contribute both to sharp fluctuation on raining therefore a discontinuous driver can better capture the evolution.

The electricity production essentially depends on the water volume collected on the artificial lake and then on the geography of the dam site. For sake of readability, we renormalize the surface $S$ to the unit, that is the value function is expressed for unit of square meters of the basin surface.
Both the penalization functions $f_-$ and $f_*$ (the one for the dangerous high water level and the one for the touristic scope) are assumed quadratic\footnote{Both penalization functions are Lipschitz since functions on a finite interval.} with a coefficient $1/2 \times 10^{-3}$.

The rest of the parameters are resumed in Table \ref{parameter-table} where they are split according to their kind, i.e. dam construction, external/political penalisation thresholds, hydro production setup, raining evolution, gravity constant, discount factor and discretisation mesh setup. Values are chosen in a synthetic way such that we capture every aspect of the dynamics.

\begin{table}[h!]	
  \centering
    \begin{tabular}{|c|c|c|}
    \hline
    parameter & meaning & value \\
    \hline
    \hline
            $h^{max}$ & maximal water level inside the dam & $100$ \\
         \hline
            $h^{min}$ & bottom level of the dam& $0$ \\
            \hline
                     $h_0$ & turbine position with respect to the dam bottom  & -1\\ 
                     \hline
        ${\beta^{max}}$ & maximum flow of the spillover & $1.2$\\
\hline
    \hline
         $ h_+$  & dangerous water level & $80$ \\
        \hline 
         $h_-$  & touristic minimal quote & $50$ \\
            \hline
         $ \underline{\ell}$  & threshold defining low-flow period & $1$ \\      
          \hline
        $\mu$  & minimum outflow during low-flow period  & $0.4$\\  
        \hline
        $P$  & Penalisation for dam failure  & $0$\\  
        \hline
        \hline
        $\mathcal E$ &  Energy production normalized for unity of surface   & $3$\\
        \hline
         $S$ & renormalised surface & $1$\\ 
        \hline
        $1- \chi$ & standard efficiency for a turbine of type \emph{Francis}& $0.95$\\
         \hline
         $\kappa$ & switching cost & $3$\\
          \hline
         \hline
         $a$ & intensity exponential decay between two rains & $0.3$ \\
         \hline
         $b$ &  minimal intensity raining & $0.01$\\
         \hline
         $c$ & self exciting effect & $0.1$ \\
         \hline
         \hline
           $g$ & gravity acceleration & $9.806$\\
         \hline
              \hline
        $\rho$ & discount rate for the value function & $0.2$\\
                     \hline \hline
         $nh$ & number of points for $H$ mesh & $100$\\
    \hline
    $nl$ & number of points for $\lambda$ mesh & $100$\\
    \hline
    $l_{max}$ & maximal intensity & $3$ \\
      \hline
         $ l_{min}$  & minimal intensity & $0.01$ \\
       \hline
    \end{tabular} 	
    \caption{Central values for the parameters used for the numerical tests.}\label{parameter-table}
   \end{table}
   
For simplicity, the law of raining quantities is assumed discrete taking three values, see Table \ref{table-2}. That is after a storm the level of the water inside the dam increases by  $\{10, \, 15, \, 20\}$ meters and the average inflow due to a storm is 13 meters.

\begin{table}[h!]
  \centering
    \begin{tabular}{|c|c|c|}
    \hline
    parameter & value of $J$ & probability \\
    \hline
    \hline
         $z_1$ &  10 & $p_1=0.5$ \\
         \hline
         $z_2$ &  15 & $p_2=0.4$ \\
         \hline
         $z_3$ &  20 & $p_3=0.1$ \\
         \hline 
         \hline
    \end{tabular}
    \caption{Probability values for the numerical tests.} \label{table-2}
   \end{table}

We begin the numerical analysis by computing the two value functions; see Figure \ref{Fig1}. These functions represent the numerically estimated value functions over the state space $(h,\ell)$. We observe that both value functions are increasing in both rainfall intensity and water level, except at very high water levels, where the risk of dam failure becomes significant. In this region, the value functions decrease with respect to intensity, since during storm periods a high water level in the reservoir substantially increases the probability of dam failure.
The costs associated with the touristic quota penalise the value function particularly during drought periods, highlighting the economic relevance of the proposed framework. In particular, the constraint on the minimum outflow during low-flow regimes plays a crucial role in determining the optimal policy.
Focusing on the main diagonal of the state space (from low intensity, low water level to high intensity, high water level), the value functions exhibit concavity, indicating that risks are concentrated at the two extremes. In contrast, along the opposite diagonal (from high intensity, low water level to low intensity, high water level), the value functions are relatively flat or slightly convex, emphasising the stabilising effect of water regulation provided by the dam.

\begin{center}
\begin{figure}[ht]
    \centering
     \includegraphics[scale=0.3]{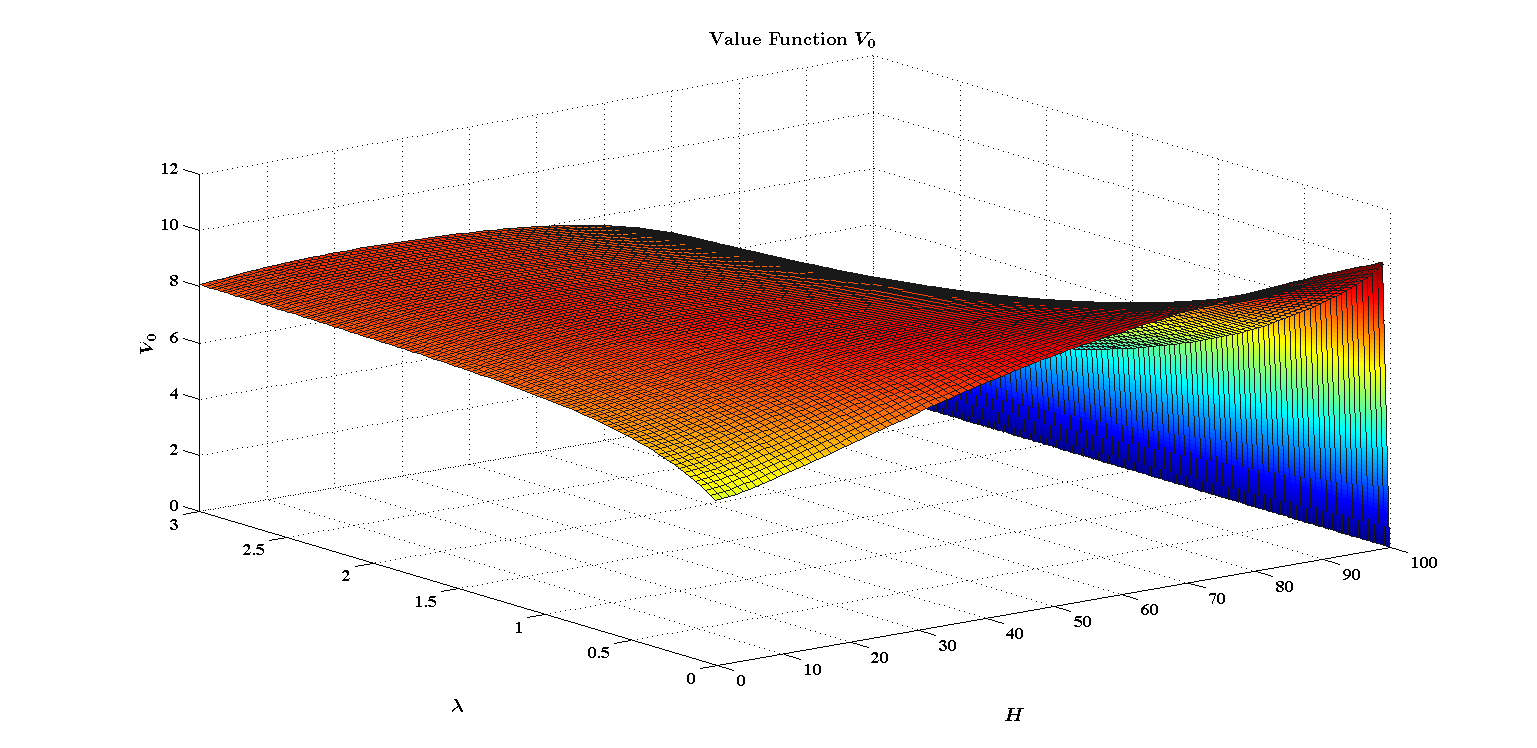} 
        \includegraphics[scale=0.3]{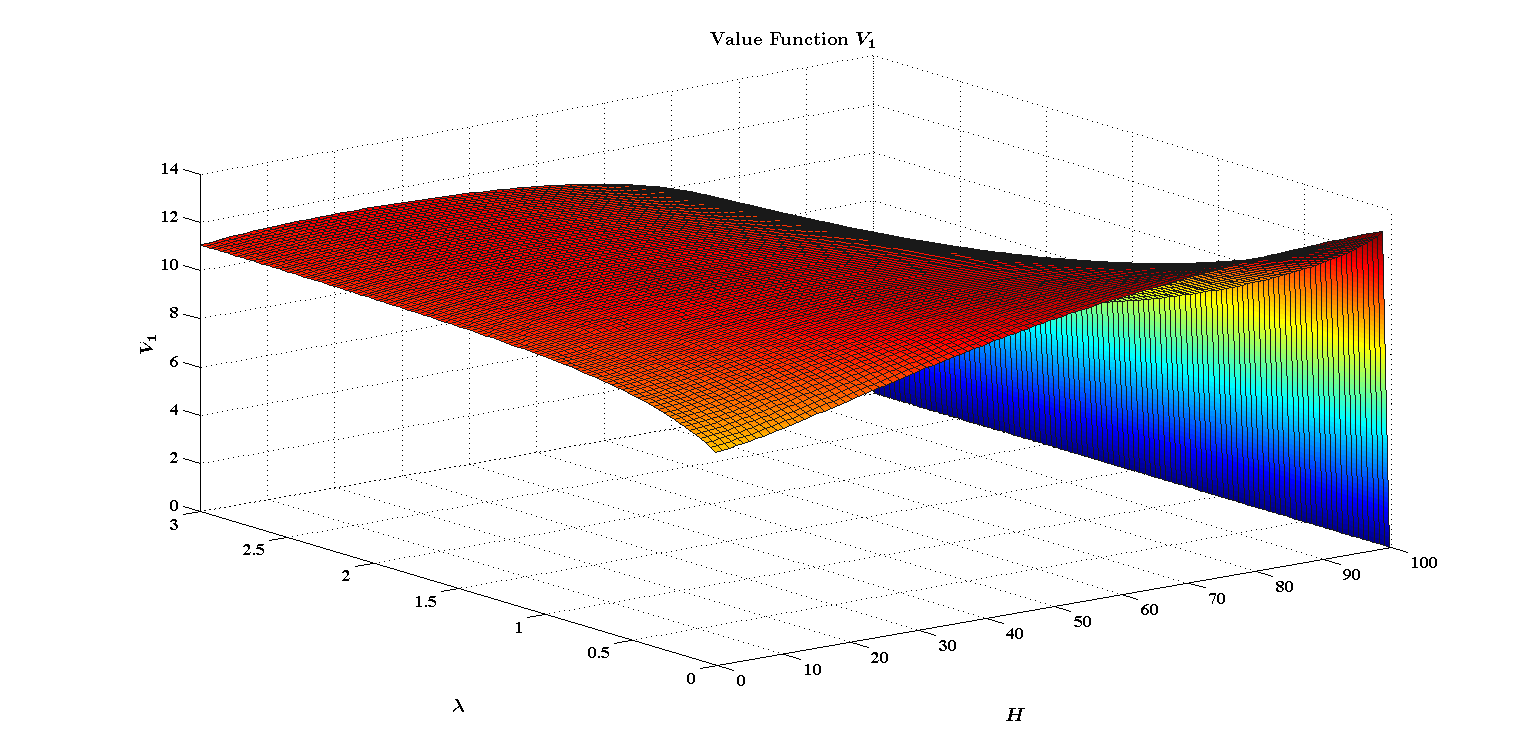} 
    \caption{\emph{Value functions $v_0$ and $v_1$.}}
    \label{Fig1}
      \end{figure}
\end{center}

Figure \ref{Fig2_R} depicts the continuation region together with the optimal switching policy between the on and off states of the turbine. For the current parameter configuration, most notably the relatively high switching costs, it is never optimal to stop electricity production. Conversely, when the water level is sufficiently high, it is optimal to initiate production and subsequently increase the outflow.
In the first panel, the initial regime is $i=0$, corresponding to the turbine being switched off. The red region indicates the switch action, i.e. a switch in the operational state of the turbine. The blue region corresponds to the no-switch region, meaning that no switch is undertaken and the turbine remains off.
In the second panel, where the turbine is initially in the on state, the blue region corresponds to the control action $i=0$, indicating that it is optimal to maintain the current production regime. Notice that the figure represents the optimal switching decision and not the current operating state of the turbine. A value equal to one (red area) corresponds to an immediate switch, whereas a value equal to zero (blue area) corresponds to continuation in the current regime.

\begin{center}
\begin{figure}[ht]
    \centering
     \includegraphics[scale=0.3]{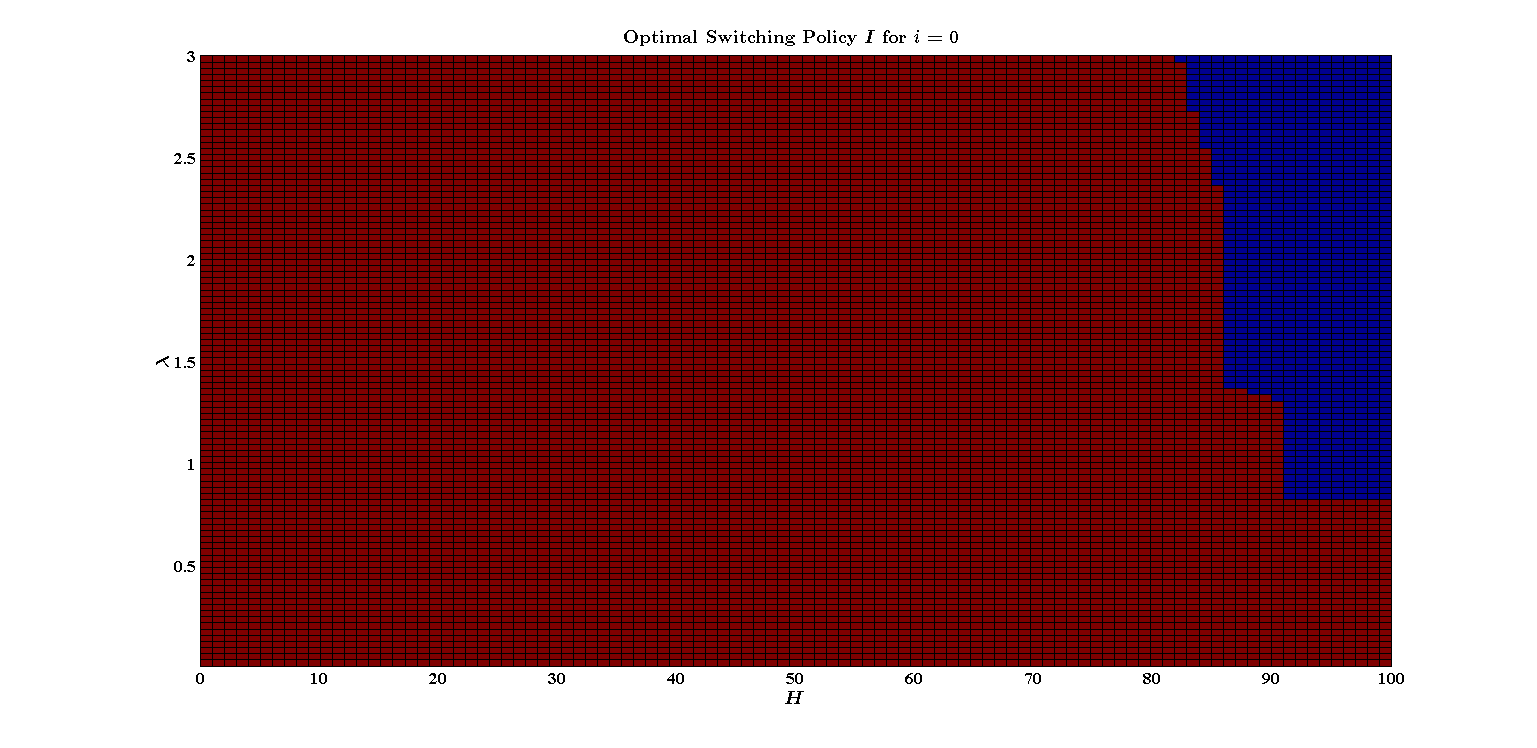} 
        \includegraphics[scale=0.3]{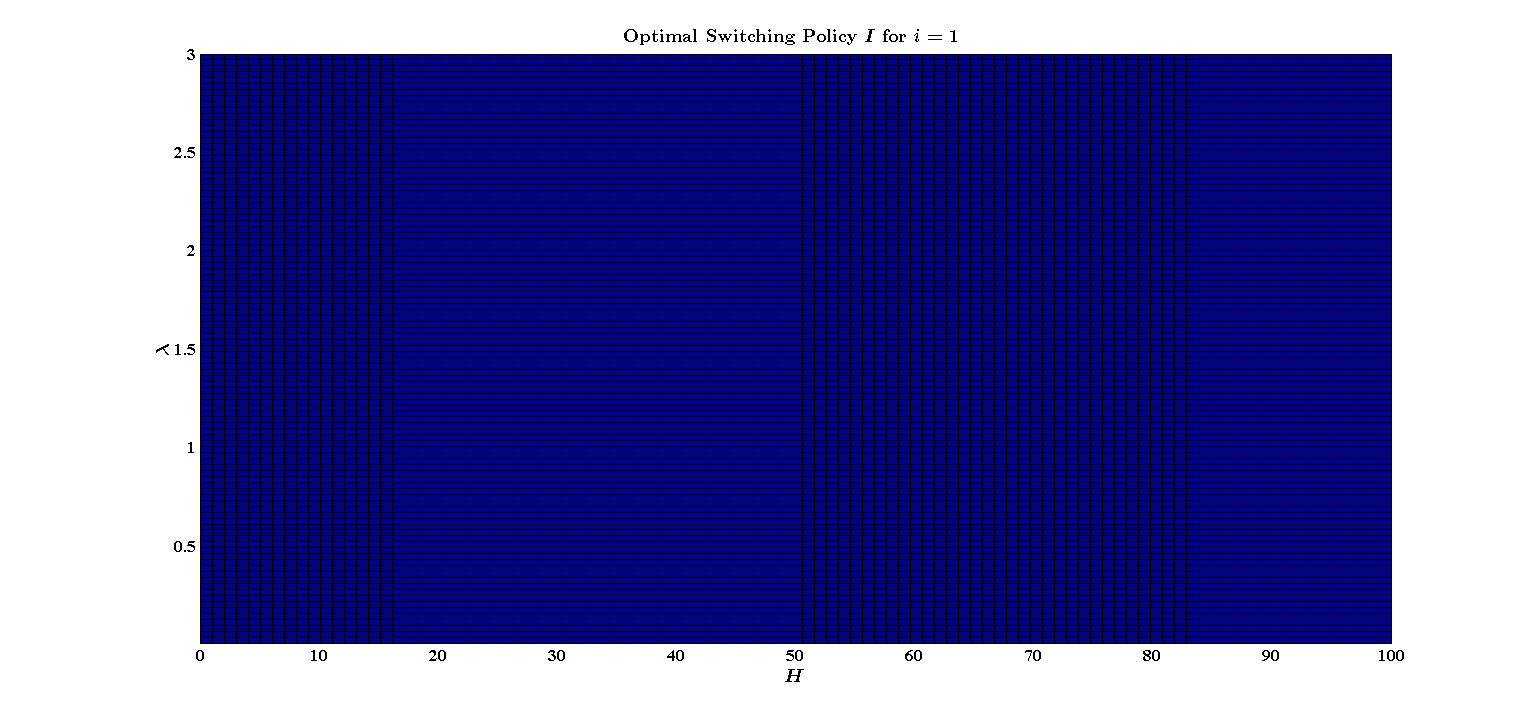} 
    \caption{\emph{Optimal Switching policy for $i=0$ and $i=1$.}}
    \label{Fig2_R}
      \end{figure}
\end{center}

This behaviour is confirmed by Figure \ref{Fig3_R}, which shows that the optimal outflow through the spillway is set at its maximum level for very high water levels in the reservoir. We note that only two colours appear for the a priori continuous control $\beta$. In Figure \ref{Fig3_R}, red corresponds to $\beta=\beta^{\max}$, while blue corresponds to $\beta=\beta^{\min}$.
This indicates that the optimal water-release policy is of bang?bang type. Moreover, the optimal threshold, defined as the water level above which it is optimal to release water at the maximum rate allowed by the spillway, decreases with rainfall intensity. This reflects the fact that during storm periods it is optimal to lower the reservoir level in order to mitigate the risk of dam failure due to storm clustering. In particular, it is optimal to keep the spillway open even when the water level is below the touristic quota. This result further confirms the central role of active water management in dam operations.
During low-flow regimes, that is, when the intensity is below $\underline{\ell}$, the policy-imposed minimum release $\mu$ is ensured through a combination of spillway discharge and turbine flow. Since the available potential energy decreases with the water level, electricity production requires a larger flow to maintain the target operating frequency when the reservoir level is low. Consequently, the flow released through the spillway decreases as the water level decreases.

\begin{center}
\begin{figure}[h]
    \centering
     \includegraphics[scale=0.38]{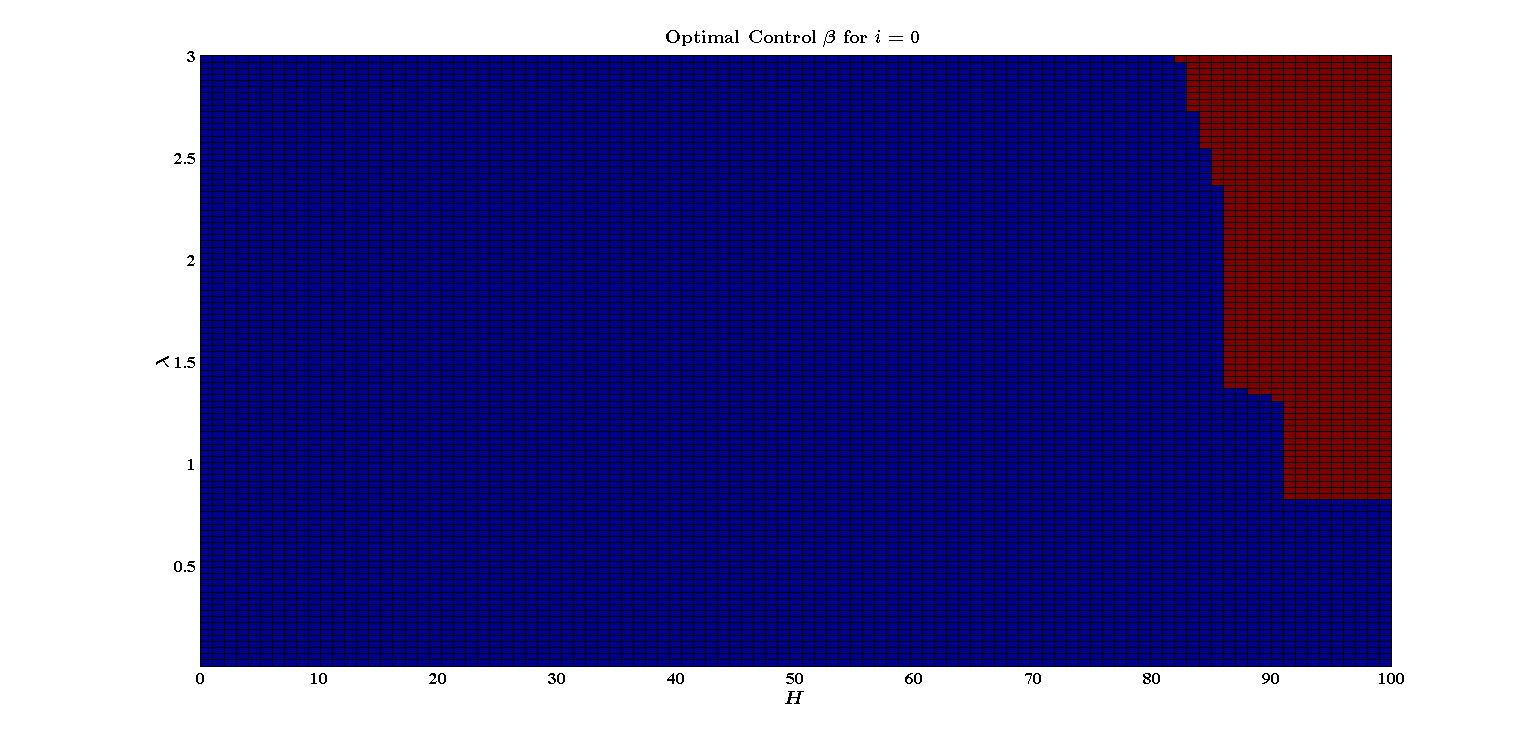} 
        \includegraphics[scale=0.38]{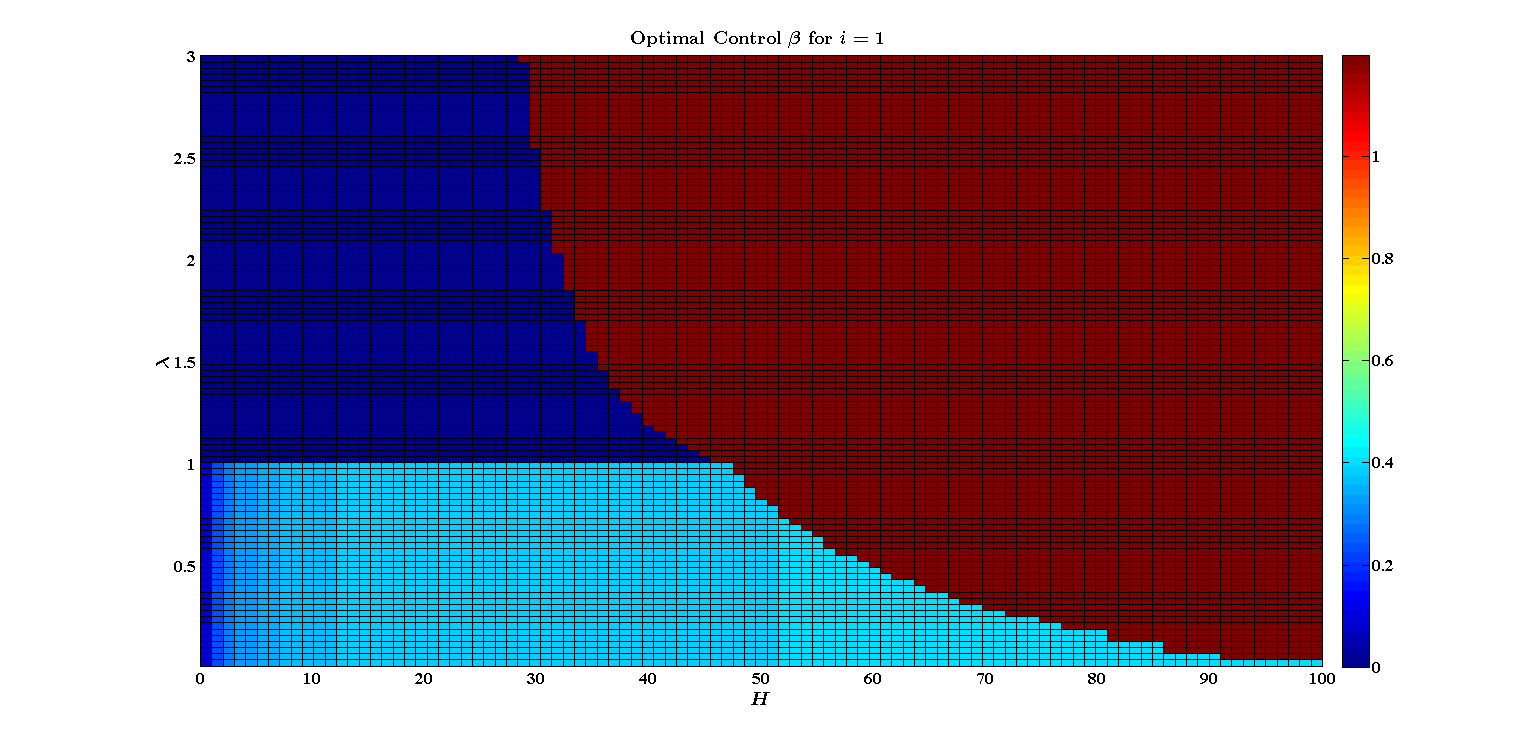} 
    \caption{\emph{Optimal control $\beta$ for $i=0$ and $i=1$.}}
    \label{Fig3_R}
      \end{figure}
\end{center}

Finally, we study the sensitivity of the optimal policy. Since our goal is to evaluate the impact of climate change, we focus on the effect of the parameter $c$, which describes the self-excitation of the process. This parameter captures both the occurrence of extreme rainfall events, and thus the risk of dam overtopping, and longer dry spells, which increase the risk of water shortages.
As a consequence, there is a dilemma for the manager: reducing the water level decreases the risk of overtopping but increases the cost of low water and the likelihood of incurring high costs due to water scarcity. Conversely, increasing the water level reduces the risk of water shortages but raises the likelihood of dam overtopping, thus taking on a significant risk.
In the limit when $c=0$, the Hawkes process becomes a standard Poisson process. 
To evaluate the impact of the parameter $c$, we examine the water level at which it is optimal to open the spillover system. This can be interpreted as the optimal water level inside the artificial lake for any fixed intensity $\lambda$. Figure \ref{Fig4_R} shows the optimal water level as $c$ moves from 
 $1$ to $1 e^{-4}$ (i.e. $c=1, 0.1, 0.01, 0.001, 0.0001$). We observe that the optimal water level is decreasing in $c$ (i.e. increasing with the plots) showing that the dam overtopping risk dominates over the water shortages. This is more pronounced for intermediate values of $\lambda$. 

\begin{center}
\begin{figure}[h!]
    \centering
     \includegraphics[scale=0.24]{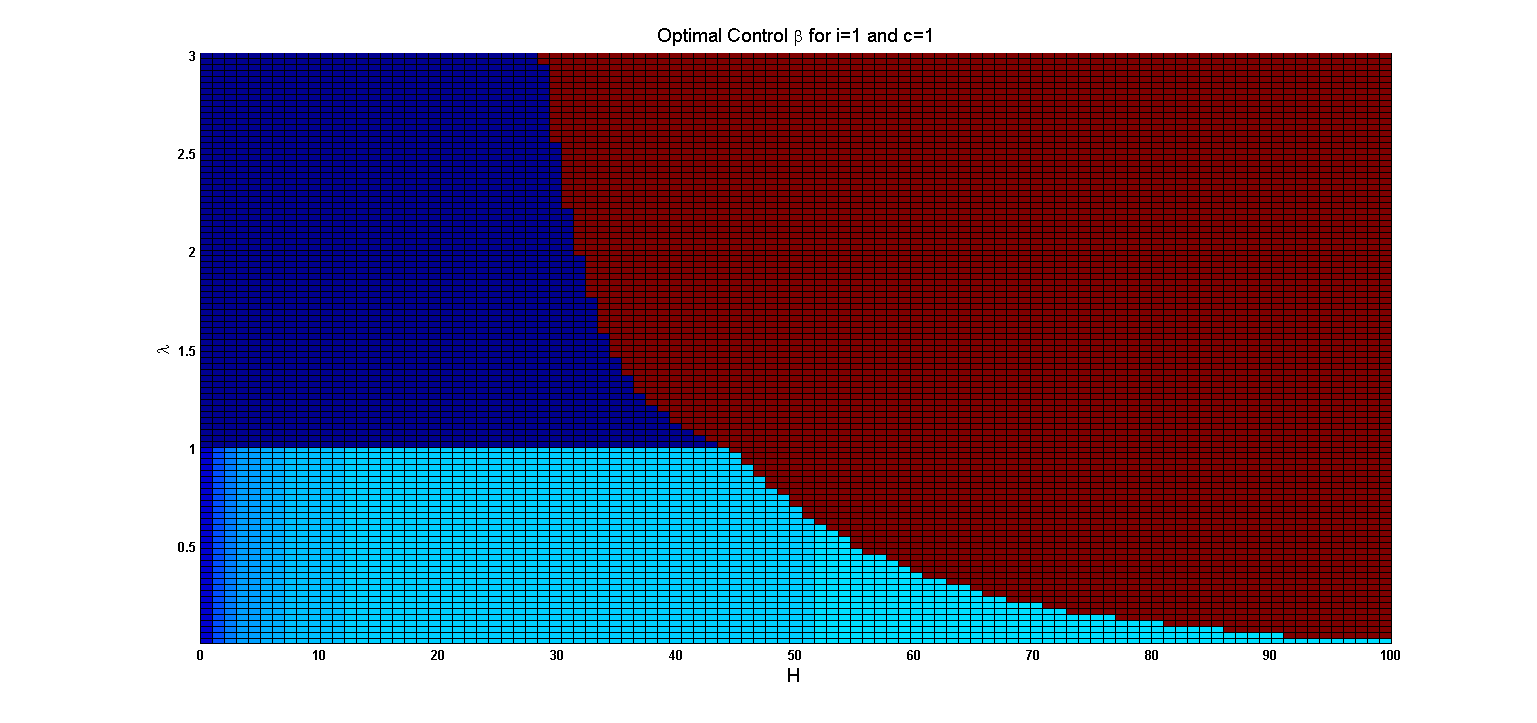} 
 \includegraphics[scale=0.24]{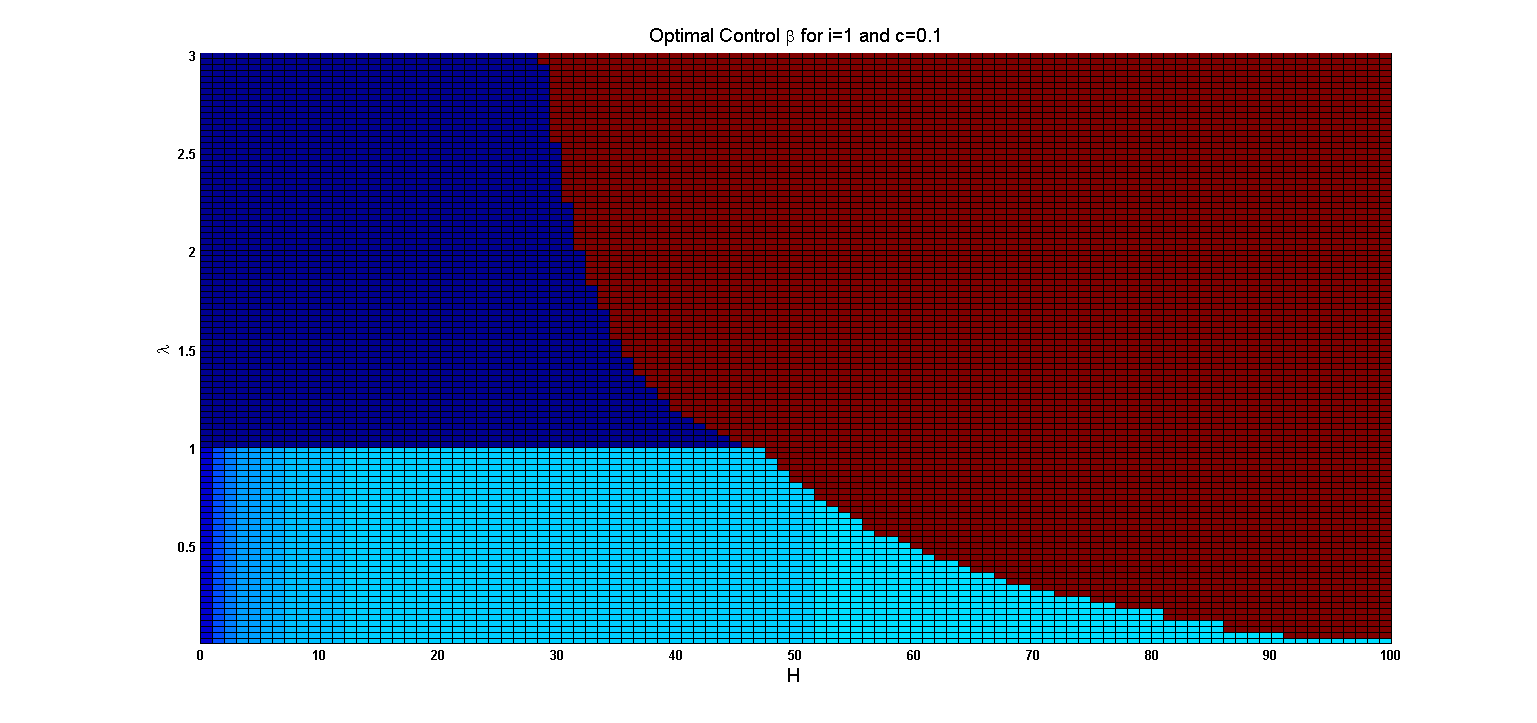}
  \includegraphics[scale=0.24]{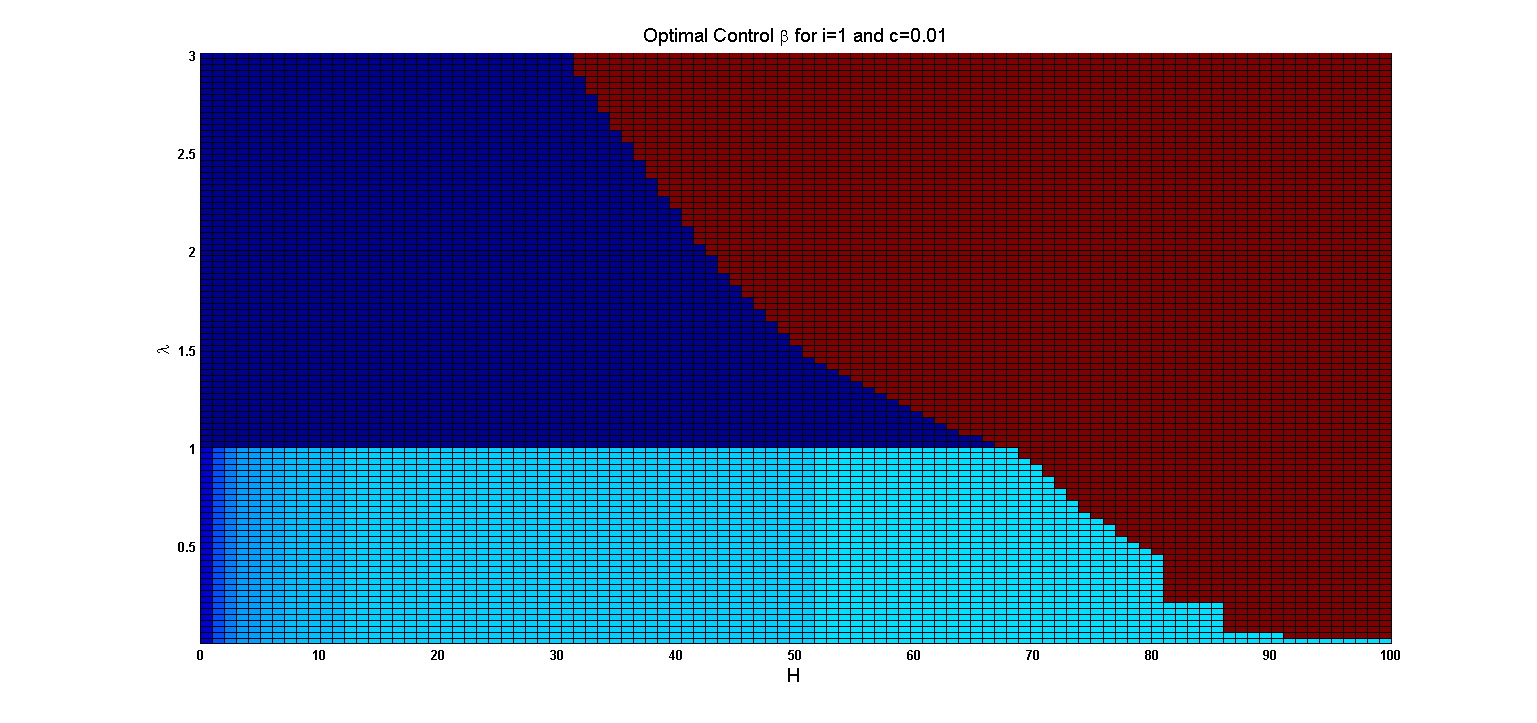}
   \includegraphics[scale=0.24]{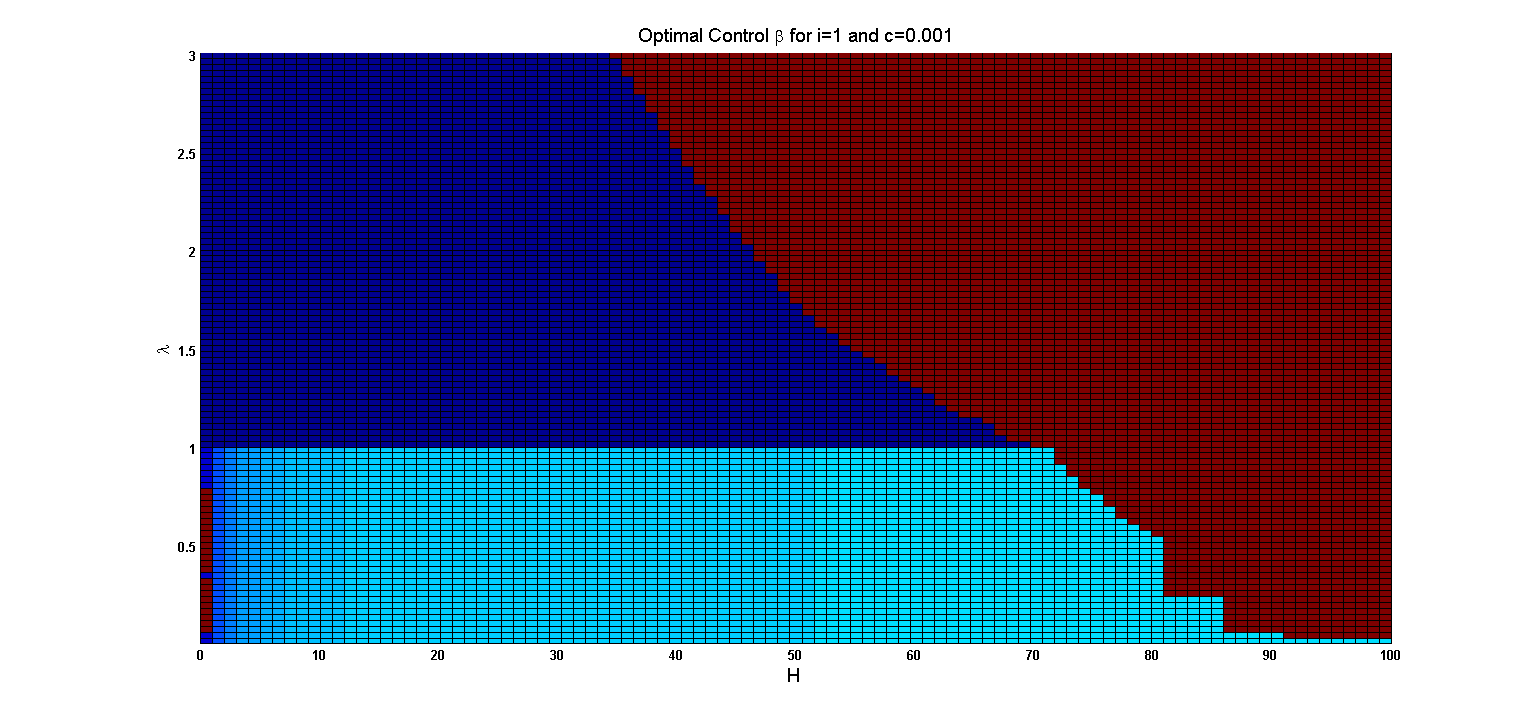}
    \includegraphics[scale=0.24]{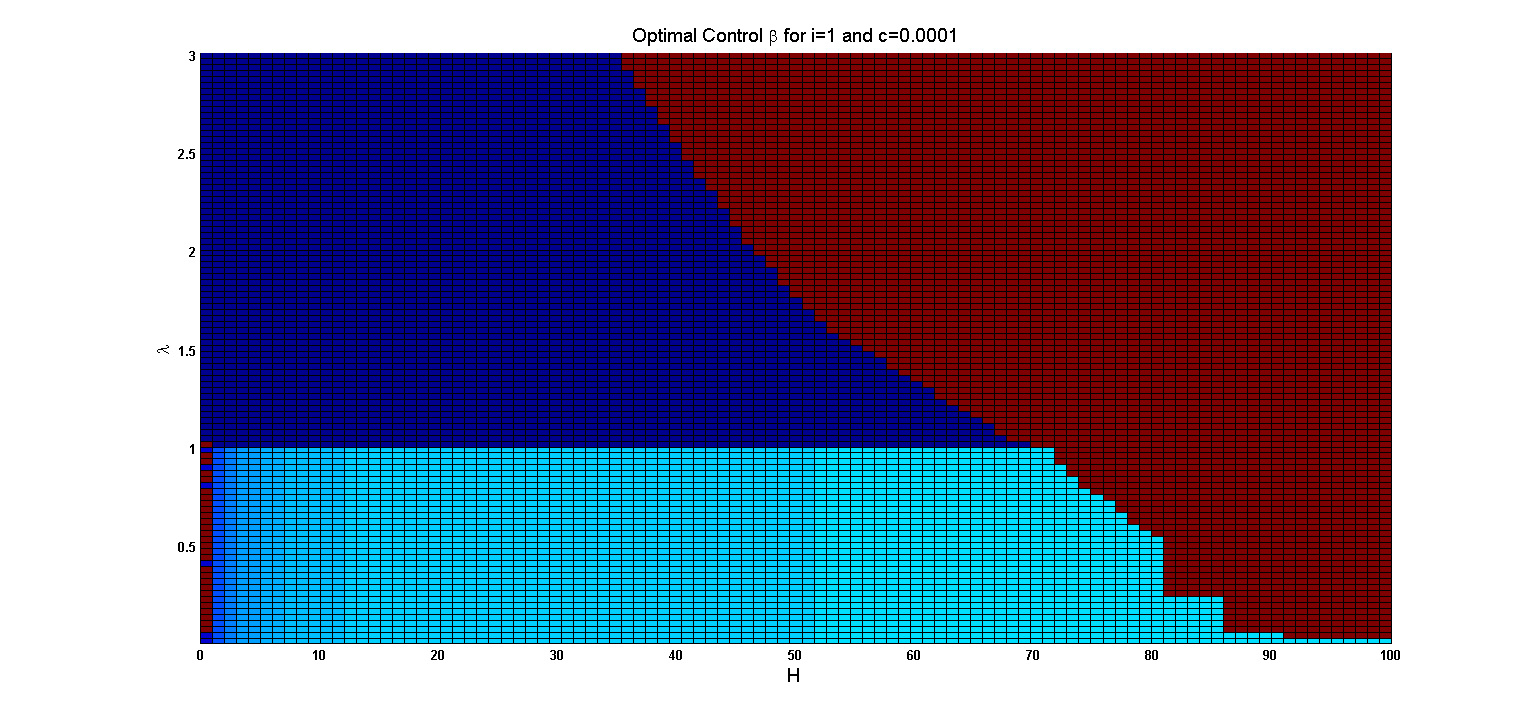}    
    \caption{\emph{Optimal control $\beta$ in case $i=1$ for $c= 1, 0.1, 0.01, 0.001, 0.0001$.}}
    \label{Fig4_R}
      \end{figure}
\end{center}

Finally we present some trajectories under the optimal controls, their plots are in Figure \ref{Fig5}

\begin{Remark}
We observe that the irregular patterns observed near the boundary are numerical artefacts associated
with the Neumann boundary conditions and not economically meaningful features of
the optimal policy. In fact boundary numerical artifacts often arise due to the fact that Neumann-type boundary conditions are imposed. Reflective conditions frequently lead to ill-posedness issues. Moreover, the absence of Brownian terms, and thus of second-order derivatives, does not help to regularize the problem. 

\end{Remark}
\begin{center}
\begin{figure}[h]
    \centering
        \includegraphics[scale=0.45]{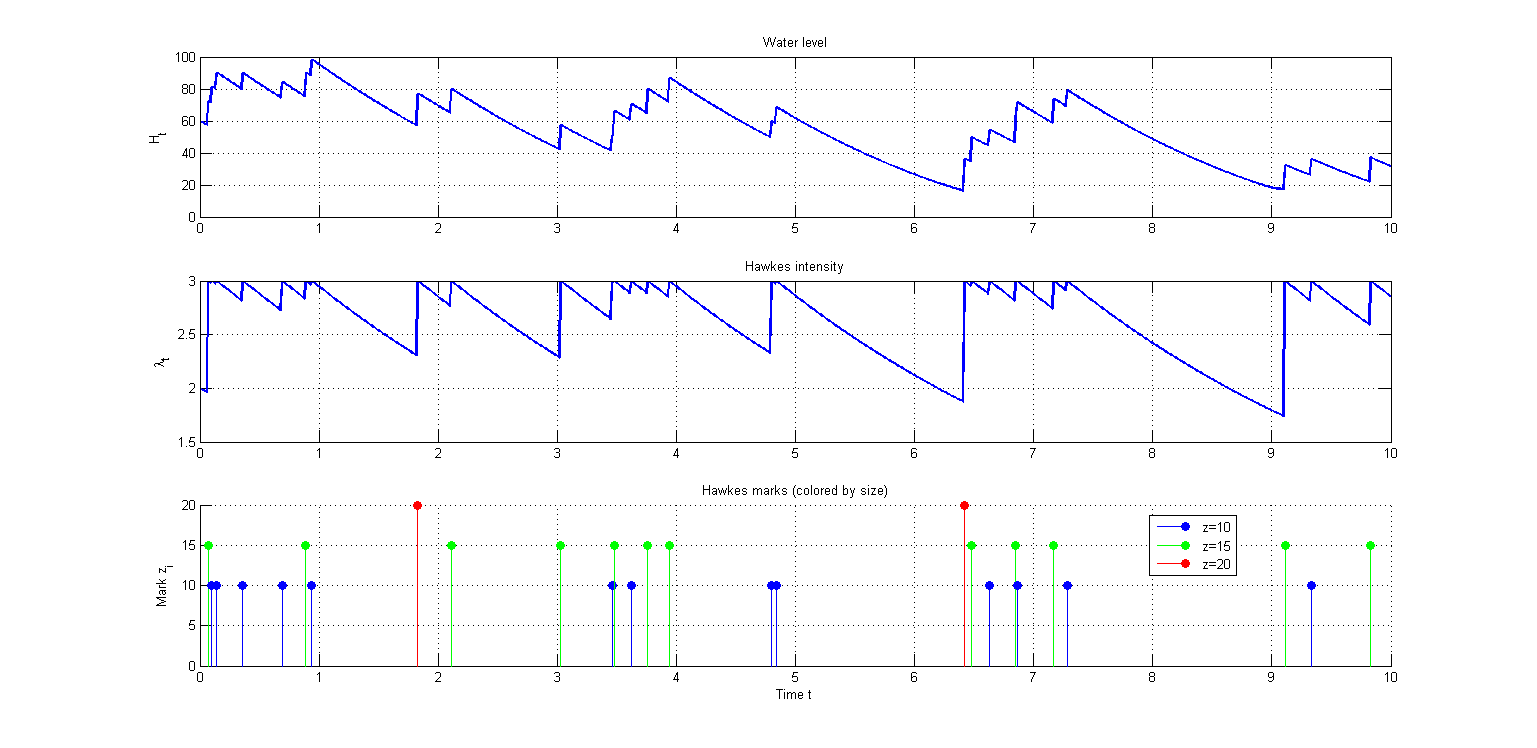} 
             \includegraphics[scale=0.45]{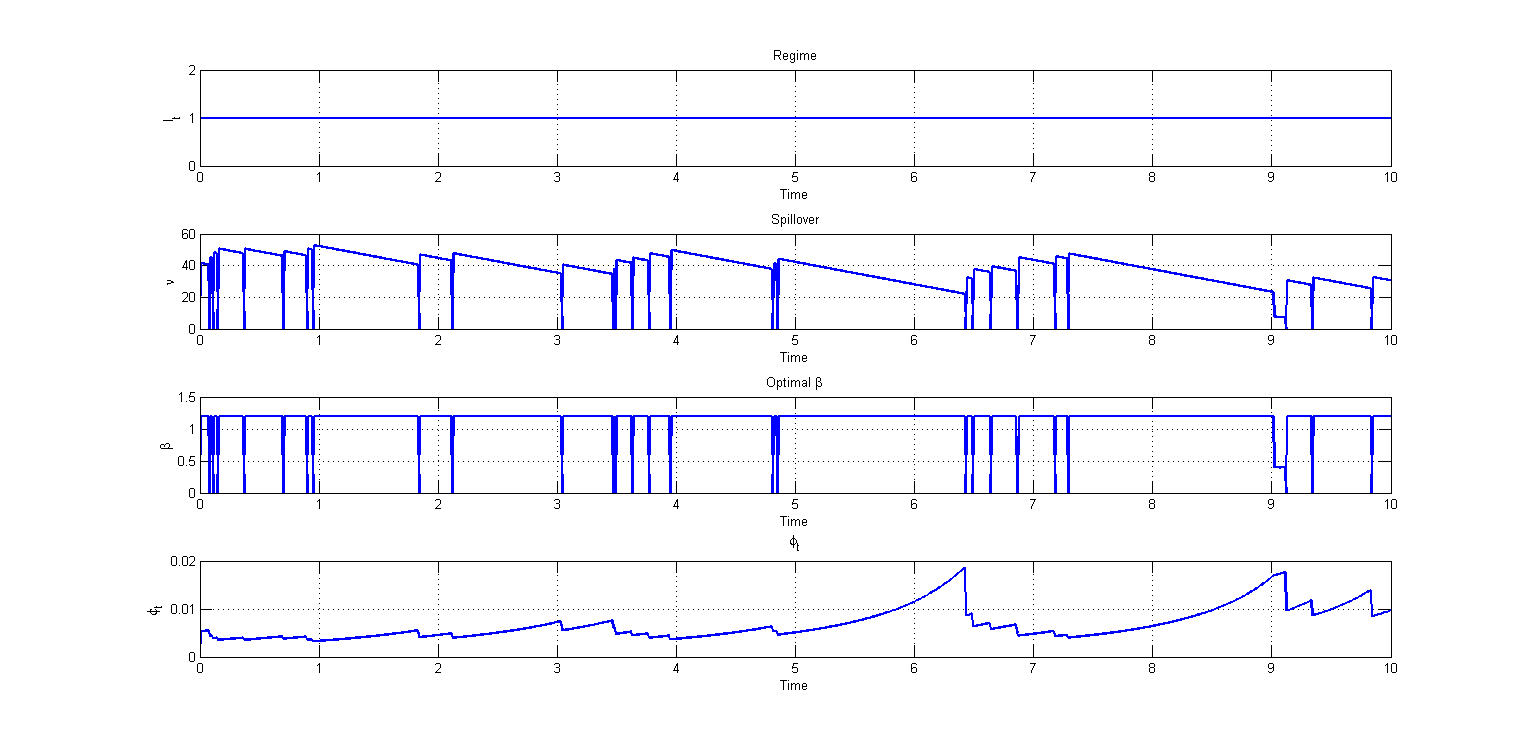} 
    \caption{\emph{Optimal trajectories and optimal controls}}
    \label{Fig5}
      \end{figure}
\end{center}

\section{Concluding Remarks}\label{concluding-remarks}

Climate deregulation has a profound impact on the frequency and quantity of rainfall, even in temperate zones. The main consequence is that the usual diffusion/Gaussian models for water inside reservoirs, as discussed in \cite{Brockwell, Bather, Cuvelier, Hottovy}, are no longer valid. This is because many temperate areas, such as Western Europe, California, and Eastern Australia, experience both intense rainfall episodes, often referred to as intense rainfall events, and very long periods of drought.
These new empirical facts cannot be captured by a standard Brownian diffusion, which is characterised by an infinitely divisible law with respect to time.

In this paper, we have proposed a new framework based on marked Hawkes processes. Hawkes processes \cite{Haw71} are point processes with the useful property of being self-exciting. They can capture very heavy rainfall events that have a macroscopic impact, and, at the same time, the inter-event times are dichotomous, either very short or extremely long.
They was deeply exploited in the biological \cite{Haw71}, seismic activity \cite{Ogata}
and more recently financial literature \cite{ACL15, BCCS, CMS, EGG10}.

Dam management is crucial for electricity production, with hydropower gaining importance due to the need to reduce reliance on coal and oil for ecological reasons and to offset the instability of solar and wind power generation. Electricity prices significantly influence the economic viability of hydropower. This paper focuses on the optimal policy for opening and closing turbines to maximize electricity production, while considering physical and regulatory constraints. 
Additionally, we introduce a preliminary climate impact model to address the growing influence of climate change on dam operations. Rising temperatures and shifting precipitation patterns are expected to alter reservoir inflows, with earlier snow-melts and more frequent droughts and intense rainfall events (``water bombs"). These changes, coupled with increasing agricultural water demands, will necessitate stricter government regulations for sustainable dam management. 
To the best of our knowledge, this is the first study to explicitly incorporate the physical constraints and the climate change impacts  of dam operations into the analysis. This framework formulates the problem as a switching stochastic control problem, a model widely studied in finance for productivity and debt management, as well as for other shared resources. Our work also explores the sensitivity of dam management with respect to the self-exciting parameter, which reflects climate change through its variation. In particular, we observe that the optimal water level inside the dam decreases as the self-exciting parameter increases, especially for intermediate values of the initial intensity.
This suggests that for the parameter set considered in our numerical experiments, the effect of overtopping risk appears to dominate the effect of water shortages.
The main consequence is that dams will no longer primarily serve as a source of water supply but will instead take on a more significant role in flood protection. 

From an operational perspective, the Hawkes intensity is not directly observable and would need to be estimated from rainfall observations through filtering or likelihood-based procedures. While estimation methods for Hawkes processes are well established, their calibration in a changing climatic environment remains a challenging issue and represents an important direction for future research. 

Moreover the present sensitivity analysis focuses on the self-excitation parameter because it is the parameter most directly associated with the clustering effects induced by climate change. A broader sensitivity analysis with respect to economic and regulatory parameters is left for future investigation.

Rather than resolving the debate on optimal dam management, this study highlights critical questions about water dynamics within reservoirs due to regulatory constraints. While the detailed modeling and analysis of climate change impacts are deferred to future research, this paper underscores the urgency of integrating such considerations into dam management strategies. An important challenge for future research is the estimation and filtering of Hawkes intensities from hydrological data, as well as the calibration of self-excitation parameters in a changing climate.


\section{Acknowledgments}
Cristina Di Girolami and Simone Scotti acknowledge the support of INDAM, Instituto Nazionale di Alta Matematica Francesco Severi, and GNAMPA, Gruppo Nazionale per l'Analisi Matematica, la Probabilit\`a e le loro Applicazioni,
Project CUP E53C23001670001 \emph{Problemi di controllo ottimo stocastico con memoria a informazione parziale}. Cristina Di Girolami acknowledges the support of Ministero Italiano dell'Università e della Ricerca, through the Programma di Ricerca Scientifica
di Rilevante Interesse Nazionale 2022BEMMLZ\_003  \emph{Stochastic control and games and the role of information}. The research of M'hamed Gaigi was supported by the Tunisian Ministry of Higher Education and Research through the research project PEJC2024-D1P28. 
The research was supported by Institute Louis Bachelier, Paris.

\newpage


\begin{thebibliography}{99}



\bibitem{Abdel-Hameed} Abdel-Hameed, M., \textit{Optimal control of a dam using $P^{M}_{\lambda , \tau}$ policies and penalty cost when the input process is a compound Poisson process with positive drift.}, Journal of Applied Probability, 37(2), 408-416 (2000).

\bibitem{ACL15}
Ait-Sahalia, Y., Cacho-Diaz, J., and Laeven, R. J. (2015). Modeling financial contagion using mutually exciting jump processes. J. Financ. Econ., 117(3), 585-606.

\bibitem{Alonzo}
Alonzo, B., Ringkjob, H. K., Jourdier, B., Drobinski, P., Plougonven, R., and Tankov, P. (2017). Modelling the variability of the wind energy resource on monthly and seasonal timescales. Renewable energy, 113, 1434-1446.

\bibitem{BardiCapuzzo1997}
Bardi, M. and Capuzzo-Dolcetta, I. \textit{Optimal Control and Viscosity Solutions of Hamilton--Jacobi--Bellman Equations}. Birkh\"auser, Boston, 1997.

\bibitem{Brockwell} Attia, F.A., Brockwell, P.J. \textit{The Control of a Finite Dam.}, Journal of Applied Probability, 19(4), 815-825 (1982).

\bibitem{Bather} Bather, J.A., \textit{A Diffusion Model for the Control of a Dam.}, Journal of Applied Probability, 5(1), 55-71 (1968).

\bibitem{BerBriScoSga21} Bernis, G. Brignone, R., Scotti, S. and Sgarra, C. (2021): A Gamma Ornstein-Uhlenbeck model driven by a Hawkes process.
Math. Financ. Econ., 15, 747-773.

\bibitem{BondiPriola25} Bondi, A. and Priola, E. (2025) Preprint, arXiv:2307.16871, 2024. {\it https://arxiv.org/pdf/2307.16871}. 


\bibitem{BBH21}
Bessy-Roland, Y., Boumezoued, A., and Hillairet, C. (2021). Multivariate Hawkes process for cyber insurance. Ann. Actuar. Sci., 15(1), 14-39.

\bibitem{BCCS}
Brachetta, M., Callegaro, G., Ceci, C., and Sgarra, C. (2024). Optimal reinsurance via BSDEs in a partially observable model with jump clusters. Finance and Stochastics, 28(2), 453-495.

\bibitem{BouchTouzi2011} Bouchard, B. and Touzi, N. \textit{Weak dynamic programming principle for viscosity solutions}. SIAM Journal on Control and Optimization, 49(3), 948–962, 2011.


\bibitem{CMS}
Callegaro, G., Mazzoran, A., and Sgarra, C. (2022). A self-exciting modeling framework for forward prices in power markets. {\it Applied Stochastic Models in Business and Industry}, 38(1), 27-48.

\bibitem{HwRains}  Chapon A., Ouarda T.B.M.J. and Bertand N., \textit{Stochastic generator for rainfall with a Hawkes process marked by an extended generalized Pareto and a vine copula} Environmental Modelling and Software 191 (2025) 106490.

\bibitem{chegaily}  Chevalier, E., Gaigi, M., and Ly Vath, V. (2017).
Liquidity risk and optimal dividend/investment strategies. {\it Mathematics and Financial Economics, 11(1)}, 111-135.


\bibitem{ChLyRocSco}  Chevalier, E.,  Ly Vath, V., Roch, A., and Scotti, S. (2015). Optimal exit strategies for investment projects. {\it Journal of Mathematical Analysis and Applications, 425(2)}, 666-694.

\bibitem{ChDiGaGiSc}  Chevalier, E., Di Girolami, C., Gaigi, M, Giovannini, E. and  Scotti, S. (2023) \textit{A dam management problem with energy production as an optimal switching problem}. Applied Stochastic Modelling for Business and Industry; 1-16. doi: 10.1002/asmb.2840
%
%
%


\bibitem{Cuvelier} Cuvelier, T., Archambeau, P., Dewals, B., and Louveaux, Q.\textit{Comparison between Robust and Stochastic Optimization for Log-term Reservoir Management Under Uncertainty}, Water Resources Management, 32(5), 1599-1614 (2018).

\bibitem{DZ}
Dassios, A., and Zhao, H. (2013). Exact Simulation of Hawkes Process with Exponentially Decaying Intensity. Electronic Communications in Probability, 18(62).

\bibitem{EGG10} Errais, E., Giesecke, K., and Goldberg, L. R. (2010). Affine point processes and portfolio credit risk. SIAM J. Financial Math., 1(1), 642-665.




\bibitem{Faddy} Faddy, M.J., \textit{Optimal Control of Finite Dams: Continuous Output Procedure.} Advances in Applied Probability, 6(4), 689-710 (1974).

\bibitem{Harv} Ga\"igi, M., Ly Vath, V. and Scotti, S. \textit{Optimal harvesting under uncertain environment with clusters of catastrophes}. Journal of Economic Dynamics and Control, Volume 179, October 2025, 105165.

\bibitem{Geman}
Geman, H., El Karoui, N., and Rochet, J. C. (1995). Changes of numeraire, changes of probability measure and option pricing. Journal of Applied probability, 32(2), 443-458.

\bibitem{Haw71} Hawkes, A. G. (1971): Spectra of some self-exciting and mutually exciting point processes. Biometrika, 58(1), 83-90.

\bibitem{HLOS22}
Hillairet, C., Lopez, O., d'Oultremont, L., and Spoorenberg, B. (2022). Cyber-contagion model with network structure applied to insurance. Insur. Math. Econ., 107, 88-101.


\bibitem{HL21}
Hillairet, C., and Lopez, O. (2021). Propagation of cyber incidents in an insurance portfolio: counting processes combined with compartmental epidemiological models. Scand. Actuar. J., 2021(8), 671-694.

%
%
\bibitem{Hottovy} Hottovy, S., and Stechmann, S.N., \textit{Threshold Models for Rainfall Convection: Deterministic versus Stochastic Triggers.}, SIAM Journal of Applied Mathematics, 75(2), 861-884 (2015).

\bibitem{FlemingSoner2006}
Fleming, W.H.  and Soner., M.H. \textit{Controlled Markov Processes and Viscosity Solutions}. Springer, New York, 2nd edition, 2006.


\bibitem{Ishi2004} Ishikawa, Y. (2004)  \textit{Optimal control problem associated with jump processes.}  Applied Mathematics
and Optimization, 50, 21–65.

\bibitem{JacShi2003} J. Jacod, A.N. Shiryaev, \textit{Limit Theorems for Stochastic Processes}, Second, in: Grundlehren der
Mathematischen Wissenschaften, vol. 288, Springer-Verlag, Berlin, 2003
\bibitem{JMS17}
Jiao, Y., Ma, C., and Scotti, S. (2017). Alpha-CIR model with branching processes in sovereign interest rate modeling. Finance and Stochastics, 21(3), 789-813.


\bibitem{JMSS19}
Jiao, Y., Ma, C., Scotti, S., and Sgarra, C. (2019). A branching process approach to power markets. Energy Economics, 79, 144-156.

\bibitem{JMS21}
Jiao, Y., Ma, C., Scotti, S., and Zhou, C. (2021). The Alpha-Heston stochastic volatility model. Math. Finance, 31(3), 943-978.


\bibitem{Krach-Macrina-24}
Krach, F., Macrina, A., Kanter, A., Hampwaye, E., Hlalukana, S., and Rateele, N. T. (2024). The Financial Impact of Carbon Emissions on Power Utilities Under Climate Scenarios. International Journal of Theoretical and Applied Finance, 27(01).


\bibitem{KushDup} Kushner, H., and Dupuis, P. G. (2013). {\it Numerical Methods for Stochastic
Control Problems in Continuous Time} (Vol. 24). Springer Science \& Business Media.

\bibitem{Lando}
Lando, D. (1998). On Cox processes and credit risky securities. Review of Derivatives research, 2, 99-120.

\bibitem{LyVath-Pham}  V. Ly Vath and H. Pham, {\it Explicit solution to an optimal switching problem in the two-regime case}, SIAM J. Cont. Optim., 46, 395-426, 2007.

\bibitem{LyVath-Pham-Villeneuve} V. Ly Vath, H. Pham and S. Villeneuve, {\it A mixed singular/switching control problem for a dividend policy with reversible technology investment}, Annals of Applied Probability, 18, pp. 1164-1200, 2008.

\bibitem{GoldysWu2016} Goldys, B. and Wu, W. \textit{Dynamic programming principle for stochastic control problems driven by general L\'evy noise}. Stochastic Analysis and Applications, 34(6):1083--1093, 2016.

\bibitem{Mamon}
Mamon, R. S., and Elliott, R. J. (Eds.). (2007). Hidden Markov models in finance (Vol. 4). New York: Springer.

\bibitem{Manso}
Manso, P., Schleiss, A., Avellan, F., and  Stahly, M. (2016). Electricity supply and hydropower development in Switzerland. Proceedings HYDRO 2016.

\bibitem{McInnes} McInness, D., \textit{Optimization of Controlled Markov Chains with Application to Dam Management.}, PhD. Thesis, Monash University (2015).

\bibitem{Miller2017} McInness, D., and  Miller, B.\textit{Optimal control of a large dam using time-inhomogeneous Markov chains with an application to flood control.}, IFAC PapersOnLine, 50(1), 3499-3504 (2017).

\bibitem{Meyerhoff}
Meyerhoff, J., Klefoth, T., and  Arlinghaus, R. (2019). The value artificial lake ecosystems provide to recreational anglers: Implications for management of biodiversity and outdoor recreation. Journal of Environmental Management, 252, 109580.

\bibitem{Minghi} 
Minghi, J. (Ed.). (2011). The Structure of Political Geography (1st ed.). Routledge. https://doi.org/10.4324/9781315135267

\bibitem{MP22}
Morariu-Patrichi, M., and Pakkanen, M. S. (2022). State-dependent Hawkes processes and their application to limit order book modelling. Quant. Finance, 22(3), 563-583.


\bibitem{MoralesETC} 
Morales, F. E. C., Batista do Nascimento, A. M., Paez, M. S., Rodrigues, D. T.,  Apolinario, C. D. M. (2025). Improvement of the Rnnmm type climate index approach with a spatio-temporal model based on the Hawkes process. EGUsphere, 2025, 1-27.



\bibitem{Ogata}
Ogata, Y. (1988). Statistical models for earthquake occurrences and residual analysis for point processes. Journal of the American Statistical association, 83(401), 9-27.

\bibitem{OksendalSulem2019} \O ksendal, B. and Sulem, A. \textit{Applied Stochastic Control of Jump Diffusions}. Springer, Cham, 3rd edition, 2019.

\bibitem{Pham2009} Pham, H. \textit{Continuous-time Stochastic Control and Optimization with Financial Applications}. Springer, Berlin, 2009. 

\bibitem{Prasanchum} Prasanchum, H., and Kangrang, A.\textit{Optimal reservoir rule curves under climatic and land use changes for Lampao Dam using Genetic Algorithm.}, KSCE Journal of Civil Engineering, 22(1), 351-364 (2018).


\bibitem{Protter} Protter P.E., \textit{Stochastic Integration and Differential Equations}, 2nd Edition, vol. 21 of Applications of Mathematics (New York). Springer-
Verlag, Berlin, 2004. Stochastic Modelling and Applied Probability.

\bibitem{Roche}
Roche, C., Thygesen, K., and Baker, E. (Eds.) 2017. Mine
Tailings Storage: Safety Is No Accident. A UNEP Rapid
Response Assessment. United Nations Environment
Programme and GRID-Arendal, Nairobi and Arendal,
ISBN: 978-82-7701-170-7


%
%

\bibitem{Shardin} Shardin, A.A., and Wunderlich, R., \textit{Partially observable stochastic optimal control problems for an energy storage.}, Stochastics, 89(1), 280-310(2017).

\bibitem{Soner1988} Soner, H. M. \textit{Optimal control of jump-{M}arkov processes and viscosity solutions}.
\newblock In Fleming, W. H. and Lions, P.-L. editors, {\em Stochastic Differential Systems, Stochastic Control Theory and Applications}, volume~10 of {\em The IMA Volumes in Mathematics and its Applications}, pages 501--511. Springer, New York, 1988.


\bibitem{Timmermann}
Timmermann, A., An, S. I., Kug, J. S., Jin, F. F., Cai, W., Capotondi, A., ... Zhang, X. (2018). El Nino southern oscillation complexity. Nature, 559(7715), 535-545.

\bibitem{Tingsanchali}
Tingsanchali, T., and Chinnarasri, C. (2001). Numerical modelling of dam failure due to flow overtopping. Hydrological sciences journal, 46(1), 113-130.

\bibitem{Touzi}
Touzi, N. (2012). Optimal stochastic control, stochastic target problems, and backward SDE (Vol. 29). Springer Science \& Business Media.

\bibitem{Yeh1} Yeh, L., \textit{Optimal Control of a Finite Dam: Average-Cost Case.} Journal of Applied Probability, 22(2), 480-484 (1985).

\bibitem{Yeh2} Yeh, L., and Hua, L.J., \textit{Optimal Control of a Finite Dam: Wiener Process Input.}, Journal of Applied Probability, 24(1), 186-199 (1987).

%

\bibitem{YY22a} Yoshioka, H., and Yoshioka, Y. (2022). Modeling Clusters in Streamflow Time Series Based on an Affine Process. In Modeling, Simulation and Optimization (pp. 379-385). Springer, Singapore.


\bibitem{YongZhou1999} Yong, J. and Zhou, X. Y. \textit{Stochastic Controls: Hamiltonian Systems and HJB Equations}.
Springer, New York, 1999. 

\bibitem{YY22b} Yoshioka, H., and Yoshioka, Y. (2022). Stochastic streamflow and dissolved silica dynamics with application to the worst-case long-run evaluation of water environment. Optim. Engin., 1-34.



\bibitem{Zuckerman} Zuckerman, D., \textit{Characterization of the optimal class of output policies in a control model of a finite dam.}, Journal of Applied Probability, 18(4), 913-923 (1981).


\end{thebibliography}
\end{document}